\newcommand{\area}{\operatorname{area}}
\newcommand{\cyc}{\operatorname{cyc}}
\newcommand{\perim}{\operatorname{perim}}
\newcommand{\holes}{\operatorname{holes}}
\newcommand{\innerperim}{\operatorname{innerperim}}
\newcommand{\outerperim}{\operatorname{outerperim}}
\newcommand{\comps}{\operatorname{comps}}
\newcommand{\defterm}[1]{\blue{\emph{#1}}}
\title[Maximum number of cycles in triangular billiards]{The maximum number of cycles in a triangular-grid billiards system with a given perimeter}
\author[H. Zhu]{Honglin Zhu}
\begin{document}

\begin{abstract}
    Given a grid polygon $P$ in a grid of equilateral triangles, Defant and Jiradilok considered a billiards system where beams of light bounce around inside $P$. We study the relationship between the perimeter $\operatorname{perim}(P)$ of $P$ and the number of different trajectories $\operatorname{cyc}(P)$ that the billiards system has. Resolving a conjecture of Defant and Jiradilok, we prove the sharp inequality $\operatorname{cyc}(P) \leq (\operatorname{perim}(P) + 2)/4$ and characterize the equality cases. 
\end{abstract}

\maketitle

\section{Introduction}\label{sec_intro}
Equip the plane with a lattice of equilateral triangles of side length one, oriented so that some of the grid lines are horizontal. Following Defant and Jiradilok \cite{defant2023}, we refer to the sides of the grid cells as \defterm{panes}. Define a \defterm{grid polygon} to be a simple polygon (i.e., homeomorphic to a disk) whose boundary is a union of panes. 

Let $P$ be a grid polygon (not necessarily convex) with boundary panes $b_1, \ldots, b_n$ listed clockwise. For any boundary pane $b_i$, emit a laser beam from the midpoint of $b_i$ into the interior of $P$ so that it forms a $60^{\circ}$ angle with $b_i$. The direction depends on the orientation of $b_i$ (viewed as a unit vector going from the endpoint adjacent to $b_{i- 1}$ to the endpoint adjacent to $b_{i+1}$) and makes either a $60^{\circ}$ angle, a $180^{\circ}$ angle, or a $300^{\circ}$ angle with the positive $x$-axis (see \Cref{fig_first_eg}). The light beam travels through the interior of $P$ until reaching the midpoint of another boundary pane $b_{j}$, meeting it at a $60^{\circ}$ angle. This defines a permutation $\pi: [n] \to [n]$ where the light from the midpoint of $b_i$ hits the midpoint of $b_{\pi(i)}$. Defant and Jiradilok call $\pi$ the \defterm{billiards permutation} of $P$. 

\begin{figure}[ht]
    \centering
    \begin{tikzpicture}
	\node (image) at (0,0) {
		\includegraphics[width=0.4\textwidth]{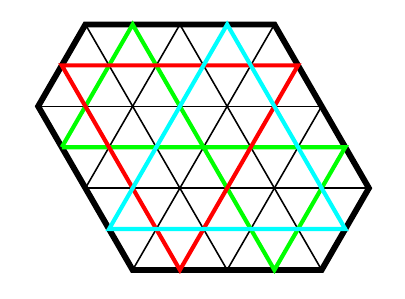}
	};
    \node[text width=3cm] at (0.2,2.3) 
    {$b_1$};
    \node[text width=3cm] at (1.8,2.3) 
    {$b_2$};
    \node[text width=3cm] at (3.2,1.4) 
    {$b_3$};
    \node[text width=3cm] at (4.0,0.1) 
    {$b_4$};
    \node[text width=3cm] at (4.0,-1.4) 
    {$b_5$};
    \node[text width=3cm] at (2.6,-2.3) 
    {$b_6$};
    \node[text width=3cm] at (1.0,-2.3) 
    {$b_7$};
    \node[text width=3cm] at (-0.4,-1.4) 
    {$b_8$};
    \node[text width=3cm] at (-1.2,0.0) 
    {$b_9$};
    \node[text width=3cm] at (-1.4,1.5) 
    {$b_{10}$};
\end{tikzpicture}
\caption{A grid polygon with $10$ boundary panes. The billiards permutation of this polygon is $(1 \; 6 \; 4 \; 9) \; (2 \; 5 \; 8) \; (3 \; 10 \; 7)$.}
\label{fig_first_eg}
\end{figure}

The billiards permutation can be interpreted as a ``billiards process'' as follows. Imagine that the boundary panes of $P$ are mirrors and that all other panes in the lattice are transparent. When the light beam from $b_i$ reaches $b_{\pi(i)}$, it reflects off the mirror so that the beam forms a $60^{\circ}$ angle with $b_{\pi(i)}$. This reflected beam will subsequently travel to $b_{\pi^2(i)}$ and so on until eventually coming back to $b_i$. Then the cycles in $\pi$ correspond precisely to the different trajectories of beams of light. In particular, observe that each $i \in [n]$ appears in exactly one cycle, and each cycle has length at least $3$. 

Much of previous work on billiards in planar regions comes from the fields of dynamical systems and recreational mathematics, where beams of light could have arbitrary initial positions and directions \cite{boldrighini1978,croft1963,croft1991,detemple1981,detemple1983,farber2002,gutkin1986,halpern1977,sine1979}. In contrast, Defant and Jiradilok's \cite{defant2023} framework imposes much greater rigidity. While doing so renders many dynamical system problems trivial or uninteresting, it also gives rise to many new and fascinating combinatorial and geometric questions. 

More broadly, there has been increasing interest to study combinatorial objects through a dynamical system setup \cite{adams2024,barkley2024,barkley2021,defant2023}. These recent works form a nascent subfield within dynamical algebraic combinatorics, dubbed ``combinatorial billiards'' by \cite{adams2024}.

In this paper, we are interested in the relationship between the following quantities associated to a grid polygon $P$: 
\begin{itemize}
    \item the \defterm{perimeter} of $P$, denoted $\perim(P)$, which is the number of boundary panes of $P$, and
    \item the number of cycles in the associated permutation $\pi_{P}$, denoted $\cyc(P)$, which is also the number of different light beam trajectories in the associated billiards system of $P$. 
\end{itemize}

As in \cite{defant2023}, we say that a grid polygon is \defterm{primitive} if there is no interior pane that cuts the polygon into two disconnected regions. Defant and Jiradilok proved the following upper bound on $\cyc(P)$ in terms of $\perim(P)$. 
\begin{theorem}[\cite{defant2023}, Theorem~1.2]\label{thm_colin}
    If $P$ is a simple grid polygon, then
    \[
        \cyc(P) \leq \frac{2}{7} \left(\perim(P) + \frac{3}{2} \right).
    \]
\end{theorem}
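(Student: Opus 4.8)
The plan is to recast the billiards permutation geometrically, read off the easy bound, and then improve it by showing that short cycles cannot be too numerous.

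\textbf{From cycles to closed billiard paths.} Fix a cycle $c$ of $\pi_P$ and trace the beam it describes: from the midpoint of one of its boundary panes the beam runs straight, at a $60^{\circ}$ angle to the grid lines, to the midpoint of the next boundary pane of $c$, reflects, and so on until it returns. A short computation shows that such a straight run crosses each pane it meets exactly at that pane's midpoint and passes through no grid vertex, so the closed curve $\gamma_c$ swept out is a polygon whose edges point along the three grid directions, whose vertices are exactly the midpoints of the boundary panes of $c$, and which turns by $\pm 120^{\circ}$ at every vertex. Hence the number of edges of $\gamma_c$ equals the length of $c$, so $\sum_c(\text{length of }c)=\perim(P)$ while $\cyc(P)$ is the number of curves $\gamma_c$. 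Since every cycle of $\pi_P$ has length at least $3$, this already gives the weaker estimate $\cyc(P)\le \perim(P)/3$.

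\textbf{Reduction to controlling the length-$3$ cycles.} Let $a_3$ be the number of cycles of length $3$ and $N$ the number of cycles of length $\ge 4$. The claimed inequality rearranges to $\sum_c\bigl(7-2\,(\text{length of }c)\bigr)\le 3$; a length-$3$ cycle contributes $+1$ to this sum and every longer cycle contributes at most $-1$, so it suffices to prove
\[
a_3 \le N+3.
\]
A length-$3$ cycle is a triangle whose three turns, each $\pm 120^{\circ}$ and summing to a multiple of $360^{\circ}$, are necessarily all equal; thus it is (the boundary of) a small equilateral triangle $\Delta$ with sides along the grid directions, pointing up or down. From the reflection law one checks that the boundary pane of $P$ at each vertex of $\Delta$ faces away from $\Delta$, with the exterior of $P$ immediately on its far side; consequently the open region bounded by $\Delta$ lies in $\operatorname{int}(P)$ and contains no boundary pane, and no two up-pointing (resp.\ down-pointing) length-$3$ triangles can be nested, since the outer one would have to enclose a point of the exterior of $P$. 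The plan is to push these constraints into a charging argument: send each length-$3$ cycle to a distinct longer cycle or to a bounded list of ``exceptional'' boundary configurations, so that no target is used twice and at most three short cycles remain unassigned, giving $a_3\le N+3$.

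\textbf{Where the difficulty lies.} The subtle point is that the curves $\gamma_c$ need not be embedded and need not be disjoint: one $\gamma_c$ can cross itself and two of them can cross each other (already in the equilateral-triangle grid polygon with side length $2$ the unique trajectory is a self-crossing hexagon of winding number $2$). So the charging cannot be organised on a simple nesting tree; instead one must work with the full planar arrangement of all the $\gamma_c$ together with $\partial P$, track its crossings, and argue that the triangular shape of a length-$3$ cycle pins down enough local boundary around it to keep the charges disjoint. Carrying out this arrangement bookkeeping --- and verifying the small-perimeter base cases responsible for the constant $\tfrac32$, equivalently the slack $+3$ in $a_3\le N+3$ --- is where essentially all of the work goes.
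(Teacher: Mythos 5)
There is a genuine gap: your argument stops exactly where the theorem begins. The reduction is fine --- since $\perim(P)=\sum_c \ell(c)$, the claimed bound is equivalent to $\sum_c\bigl(7-2\ell(c)\bigr)\le 3$, and bounding each cycle of length $\ge 4$ by its worst case $-1$ shows that $a_3\le N+3$ would suffice. But you never prove $a_3\le N+3$: the ``charging argument'' that is supposed to deliver it is announced as a plan, and you yourself say that carrying out the arrangement bookkeeping ``is where essentially all of the work goes.'' That bookkeeping is the entire content of the theorem, and the obstacles you correctly identify (non-embedded, mutually crossing trajectories; nested same-orientation triangles; the base cases responsible for the constant) are precisely the ones that make a local charging scheme hard to close. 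Note also that $a_3\le N+3$ is strictly stronger than what the theorem needs --- a cycle of length $\ell$ actually contributes $7-2\ell$, far less than $-1$ when $\ell$ is large --- so you have discarded slack and committed to proving a cleaner but possibly harder (and unverified) combinatorial statement without any argument or evidence that it holds.

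For comparison: this paper does not reprove the statement at all --- it is quoted from Defant--Jiradilok --- and the paper's own contribution is the stronger bound $\cyc(P)\le\frac14(\perim(P)+2)$, which implies the $\frac27$ bound since $\frac14(\perim(P)+2)\le\frac27\bigl(\perim(P)+\frac32\bigr)$ whenever $\perim(P)\ge 2$. The mechanism there is entirely different from yours: rather than analyzing the planar arrangement of trajectories, one removes an entire cycle of length $\ge 4$ from the billiards permutation (\Cref{thm_dropping_cycle}), shows the result is again a (generalized) grid polygon, controls how many components the removal creates (\Cref{prop_bound_on_comps}), and inducts on the perimeter; the case of only $3$-cycles is handled separately by showing there are at most two of them (\Cref{lem_triangles}). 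If you want to salvage your approach, you would need either to actually construct the charging map with a proof that no target is overloaded, or to restore the full weights $7-2\ell(c)$ and argue about long cycles as well.
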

They noted that this inequality is not tight, and conjectured that $\cyc(P) \leq \frac{1}{4}(\perim(P) + 2)$. We prove this conjecture and give a partial characterization of the equality cases. 
\begin{theorem}\label{thm_perim}
    If $P$ is a simple grid polygon, then
    \[
        \cyc(P) \leq \frac{\perim(P) + 2}{4}.
    \]
    Furthermore, if $P$ is primitive, then 
    \[
        \cyc(P) = \frac{\perim(P) + 2}{4}
    \]
    if and only if the billiards permutation of $P$ contains exactly two $3$-cycles and no cycles of length greater than $4$. 
\end{theorem}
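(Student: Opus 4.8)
The plan is to recast the bound as $\perim(P)\ge 4\cyc(P)-2$ and to distill it into a single combinatorial fact. Write $t_\ell$ for the number of $\ell$-cycles of the billiards permutation $\pi_P$; since each boundary pane lies in exactly one cycle, $\cyc(P)=\sum_\ell t_\ell$ and $\perim(P)=\sum_\ell \ell\,t_\ell$, so
\[
    \perim(P)-4\cyc(P)=\sum_\ell(\ell-4)\,t_\ell=-t_3+\sum_{\ell\ge 5}(\ell-4)\,t_\ell\ \ge\ -t_3 .
\]
Everything in \Cref{thm_perim} then follows from two statements: (a) if $P$ is primitive then $t_3\le 2$; and (b) the non-primitive case reduces to strictly smaller polygons. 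Granting (a): for primitive $P$ we get $\perim(P)-4\cyc(P)\ge -t_3\ge -2$, and equality forces $t_3=2$ together with $\sum_{\ell\ge 5}(\ell-4)t_\ell=0$, i.e.\ exactly two $3$-cycles and no cycle of length greater than $4$; conversely that condition yields equality by the displayed identity. (The restriction to primitive polygons in the equality statement is essential: gluing two equality examples along a boundary pane produces an equality example whose billiards permutation can have four $3$-cycles and a $6$-cycle.)

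For (b), recall that $P$ fails to be primitive exactly when some interior pane $e$ is a chord of $P$ (both endpoints on $\partial P$), and then $P=P_1\cup P_2$ with $P_1\cap P_2=e$, each $P_i$ a simple grid polygon with strictly fewer cells. I would establish the gluing identities $\perim(P)=\perim(P_1)+\perim(P_2)-2$ and $\cyc(P)=\cyc(P_1)+\cyc(P_2)-1$: every trajectory of $\pi_{P_i}$ missing the midpoint of $e$ is still a trajectory in $P$, whereas the unique trajectory of $\pi_{P_1}$ through $e$ and the unique trajectory of $\pi_{P_2}$ through $e$ fuse into one — where a beam would reflect off the mirror $e$ inside $P_i$, inside $P$ it instead passes straight through into the other piece. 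Induction on the number of cells (base case: a single cell) then delivers the inequality for all simple grid polygons.

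This reduces the whole theorem to claim (a): a primitive grid polygon has at most two $3$-cycles. The guiding geometric observation is that a $3$-cycle is extremely rigid: a beam turns through exactly $\pm 120^{\circ}$ at each bounce, so a length-$3$ trajectory must turn the same way all three times and traces an embedded equilateral triangle $\Delta$ with sides in the three grid directions and with its three vertices at midpoints of pairwise distinct boundary panes of $P$. (By contrast a $4$-cycle has total turning $0$ and must cross itself.) Because the vertices of such a $\Delta$ lie on $\partial P$ while $\Delta\subseteq P$, no two of these triangles can be nested or share a vertex, so each $\Delta$ sits in its own ``protrusion'' of $P$. The plan is to show that three of them cannot coexist inside a primitive polygon: I would classify, up to symmetry and by the side length $s/2$ of the triangle, the ways a $3$-cycle triangle can meet $\partial P$ and what this forces about the boundary near it, and then argue that a third such triangle — whether it is disjoint from the other two or its trajectory crosses one of them — produces an interior chord of $P$, contradicting primitivity.

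Carrying out this local analysis cleanly, and in particular ruling out three mutually disjoint $3$-cycle triangles, is where essentially all the difficulty lies and is the step I expect to be the main obstacle; I anticipate a somewhat intricate case analysis of the behavior of $\partial P$ near a $3$-cycle, organized by the triangle's size and by the types ($60^{\circ}$ versus $120^{\circ}$) of polygon corners it involves, together with a careful argument that the three protrusions, if they existed, could be ``pinched off'' by interior chords. Once (a) is in hand, the equality characterization in the primitive case is immediate from the first displayed identity, as explained above.
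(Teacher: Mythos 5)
Your reduction to the primitive case via the gluing identities $\perim(P)=\perim(P_1)+\perim(P_2)-2$ and $\cyc(P)=\cyc(P_1)+\cyc(P_2)-1$ is fine (this is Lemma~3.1 of Defant--Jiradilok, which the paper also uses), and the identity $\perim(P)-4\cyc(P)=-t_3+\sum_{\ell\ge 5}(\ell-4)t_\ell$ is correct. The fatal problem is claim (a): it is not merely unproved (you yourself flag the local analysis as the remaining obstacle), it is \emph{false}. A primitive simple grid polygon can have three or more $3$-cycles; the theorem only forces $t_3\le 2+\sum_{\ell\ge 5}(\ell-4)t_\ell$, and polygons with many $3$-cycles compensate with long cycles. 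Concretely, take the trapezoid with vertices $(0,0)$, $(15,0)$, $(14,\sqrt{3})$, $(1,\sqrt{3})$, cut off the unit triangles at the two $60^{\circ}$ corners $(0,0)$ and $(15,0)$, and attach to the ceiling three trapezoidal bumps, the $i$-th having base from $(4i-2,\sqrt{3})$ to $(4i+1,\sqrt{3})$ and top from $(4i-1,2\sqrt{3})$ to $(4i,2\sqrt{3})$, for $i=1,2,3$. One checks directly that no interior pane has both endpoints on the boundary, so this polygon is primitive. Yet each bump carries its own $3$-cycle: the beam from the midpoint $(4i-\tfrac{7}{4},\tfrac{5}{4}\sqrt{3})$ of the lower pane of the $i$-th bump's left side runs in the $300^{\circ}$ direction to the floor pane midpoint $(4i-\tfrac{1}{2},0)$, then in the $60^{\circ}$ direction to the midpoint $(4i+\tfrac{3}{4},\tfrac{5}{4}\sqrt{3})$ of the lower pane of that bump's right side, then due west back to the start, crossing only interior panes in between. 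So $t_3\ge 3$, and adding more bumps makes $t_3$ arbitrarily large. Your heuristic that each $3$-cycle ``sits in its own protrusion'' and that three protrusions force an interior chord is exactly what this example defeats: the protrusions are attached along paths of three panes each, so no single pane separates them.

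Consequently the strategy of bounding $t_3$ by an absolute constant cannot establish the inequality, and the equality characterization breaks as well: your identity alone would not exclude, for instance, $t_3=3$ together with one $5$-cycle, which also yields equality. The paper takes an entirely different route: it proves a stronger statement for generalized grid polygons by induction on the perimeter, repeatedly deleting a cycle of length at least $4$ (\Cref{thm_dropping_cycle}) and controlling how the polygon falls apart into components (\Cref{lem_cutting_path}, \Cref{lem_k+2}, \Cref{prop_bound_on_comps}); no bound on the number of $3$-cycles in isolation is ever needed, and as the example above shows, none is available.
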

Defant and Jiradilok \cite{defant2023} observed that an infinite family of equality cases exists, given by trees of unit hexagons. Furthermore, Defant and Jiradilok\footnote{Private communications.} also found an infinite family of simple primitive grid polygon whose billiards permutation contains exactly two $3$-cycles and $k$ $4$-cycles, for each $k \geq 0$ (see \Cref{fig_equality_case}). In fact, for each $p \geq 3$, we can easily extend this construction to obtain a simple primitive grid polygon $P$ with $\perim(P) = p$ and 
\[
    \cyc(P) = \left\lfloor\frac{p + 2}{4}\right\rfloor.
\]
In particular, for $p = 4k$, simply take a rhombus of side length $k$. The resulting grid polygon has $k$ $4$-cycles. For $p = 4k + 1$, take a rhombus of side length $k + 1$ and cut off a triangle of side length $1$ on the top and a triangle of side length $2$ on the bottom. The resulting grid polygon has one $3$-cycle, one $6$-cycle, and $k - 2$ $4$-cycles (for $k = 1$ there is just a single $5$-cycle). For $p = 4k + 3$, take a rhombus of side length $k + 1$ and cut off the top unit triangle. Then the resulting grid polygon has one $3$-cycle and $k$ $4$-cycles. 

\begin{figure}
    \centering
    \includegraphics[width = 0.2\textwidth]{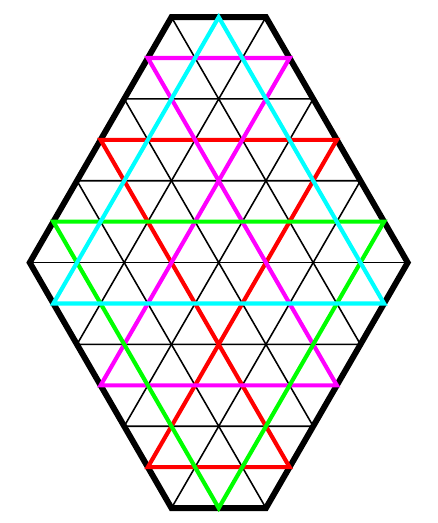}
    \caption{Take a rhombus of side length $k + 2$ and cut off the two antipodal unit triangles. The resulting grid polygon contains exactly two $3$-cycles and $k$ $4$-cycles. The figure is the case $k = 2$.}
    \label{fig_equality_case}
\end{figure}

Defant and Jiradilok's setup has an equivalent formulation in terms of Postnikov's plabic graphs introduced in \cite{postnikov2006}. Defant and Jiradilok's original motivation for considering the triangular-grid billiards problem was to better understand a class of plabic graphs. For details on how plabic graphs are related to our problem, see \cite{defant2023}. For more background on plabic graphs, see \cite{postnikov2006}. 

Lam and Postnikov \cite{lam2020} recently introduced combinatorial objects called \defterm{membranes}, which are dual to plabic graphs. In particular, minimal membranes of type $A_2$ (we recall their definition in \Cref{sec_prelim}) can be viewed as a generalization of the class of grid polygons. We will prove \Cref{thm_perim} by proving an analogous inequality for a class of minimal membranes of type $A_2$ we call \defterm{generalized grid polygons}. 

The paper is organized as follows. In \Cref{sec_prelim}, we introduce generalized grid polygons and build the necessary infrastructure around them. The main reason we consider them is that we would like to ``drop'' a cycle from the billiards permutation of a grid polygon (see \Cref{thm_dropping_cycle}), and doing so may result in a polygon that is no longer simple. In \Cref{sec_perim}, we prove the generalization of \Cref{thm_perim} for generalized grid polygons. In \Cref{sec_future}, we discuss future directions and open problems. 

\subsection*{Acknowledgements}
This work was done at the University of Minnesota Duluth with support from Jane Street Capital, the National Security Agency, and the CYAN Undergraduate Mathematics Fund at MIT. The author thanks Joe Gallian and Colin Defant for organizing the Duluth REU and providing this great research opportunity. We thank Mitchell Lee for detailed discussions and comments that significantly improved this paper. We thank Evan Chen for making the observation that led to \Cref{thm_dropping_cycle}, Maya Sankar for suggesting the use of simplicial complexes, and Colin Defant and Pakawut Jiradilok for providing comments.  

\section{Preliminaries}\label{sec_prelim}
To prove \Cref{thm_perim}, we will deal with polygons that are not simple. Thus, we need to formalize and extend the definition of grid polygons. In particular, we would like to include polygons that can overlap with themselves in certain ways. Intuitively, we can view our generalized polygon as an object obtained by gluing unit triangles together in a well-behaved way. To make this precise, we use the concept of \defterm{simplicial complexes}. 
\begin{definition}[\cite{spanier1966}]\label{def_simp_complex}
    A \defterm{simplicial complex} $X$ consists of a set of \defterm{vertices} and a set of finite nonempty subsets of vertices called \defterm{simplices} such that:
    \begin{itemize}
        \item any set consisting of exactly one vertex is a simplex, and
        \item any nonempty subset of a simplex is a simplex.
    \end{itemize}

    A simplex containing exactly $d + 1$ vertices is a $d$-simplex, and we say the \defterm{dimension} of such a simplex is $d$. The dimension of a simplicial complex $X$ is the largest dimension of any simplex of $X$. A $d$-dimensional simplicial complex $X$ is \defterm{homogeneous} if every simplex in $X$ is a face (a subset) of some $d$-dimensional simplex in $X$. 

    For two simplicial complexes $X_1$ and $X_2$, a \defterm{simplicial map} $f: X_1 \to X_2$ is a function from the vertices of $X_1$ to the vertices of $X_2$ such that the image of any simplex is a simplex. 
\end{definition}
For more on simplicial complexes, refer to textbooks on algebraic topology (e.g. \cite{spanier1966}). Observe that there is a natural way to view the triangular grid on the plane as a two-dimensional homogeneous simplicial complex, where the vertices are the grid points, the $1$-simplices are the panes, and the $2$-simplices are the unit grid triangles. Call this simplicial complex $T$.
\begin{definition}\label{def_polygon}
    A \defterm{generalized grid polygon} is a finite simply-connected homogeneous simplicial complex $X$ of dimension $2$ together with a simplicial map to the triangular grid $f: X \to T$ such that: 
    \begin{enumerate}
        \item the geometric realization of $X$ is a wedge of disks along boundary points;
        \item each simplex in $X$ is mapped by $f$ to a simplex of the same dimension in $T$; 
        \item each interior vertex is contained in exactly six $2$-simplices; and
        \item each $1$-simplex that is not on the boundary of $X$ is contained in exactly two $2$-simplices, whose images under $f$ form a diamond.
    \end{enumerate}
\end{definition}

\begin{figure}
    \centering
    \includegraphics[width = 0.4\textwidth]{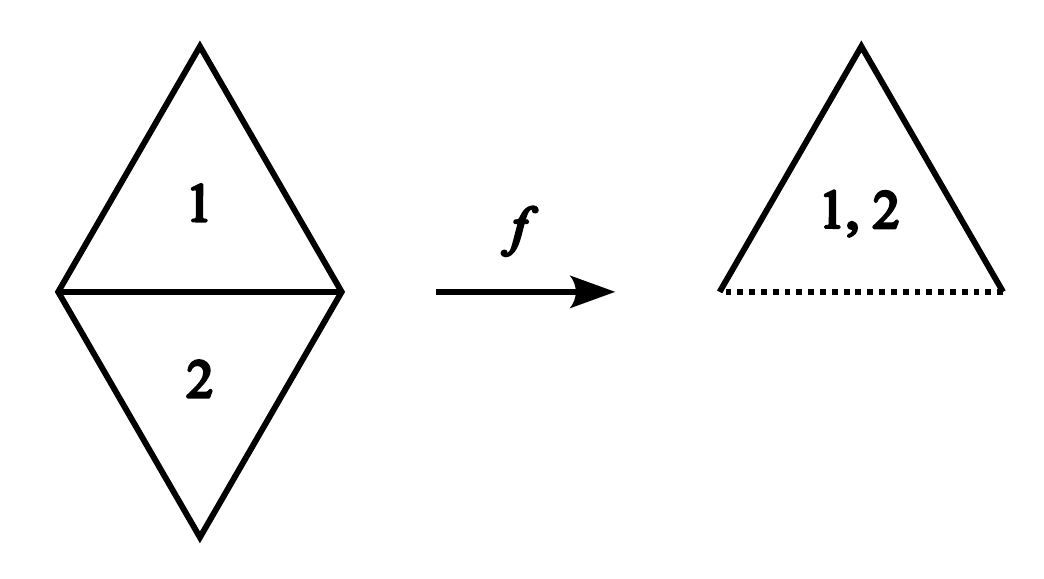}
    \caption{A simplicial complex whose simplicial map violates \Cref{def_polygon} (4). Intuitively, we do not want “folding” to happen.}
    \label{fig_bad_eg}
\end{figure}

\begin{figure}
    \centering
    \includegraphics[width = 0.2\textwidth]{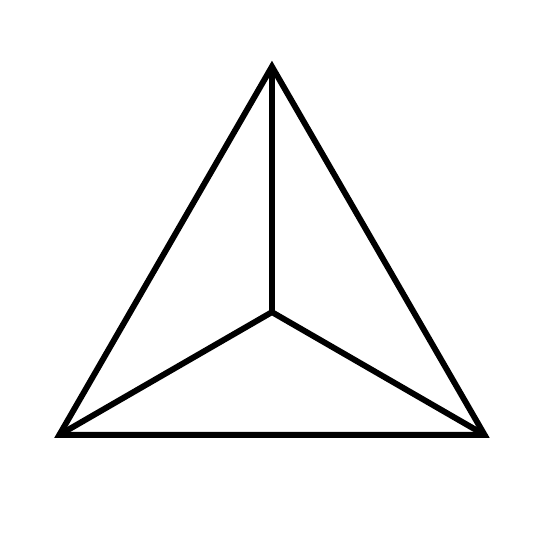}
    \caption{A simplicial complex which violates \Cref{def_polygon} (3) and has no simplicial map to the triangular grid.}
    \label{fig_bad_eg_2}
\end{figure}

\begin{figure}[ht]
    \centering
    \includegraphics[width = 0.7 \textwidth]{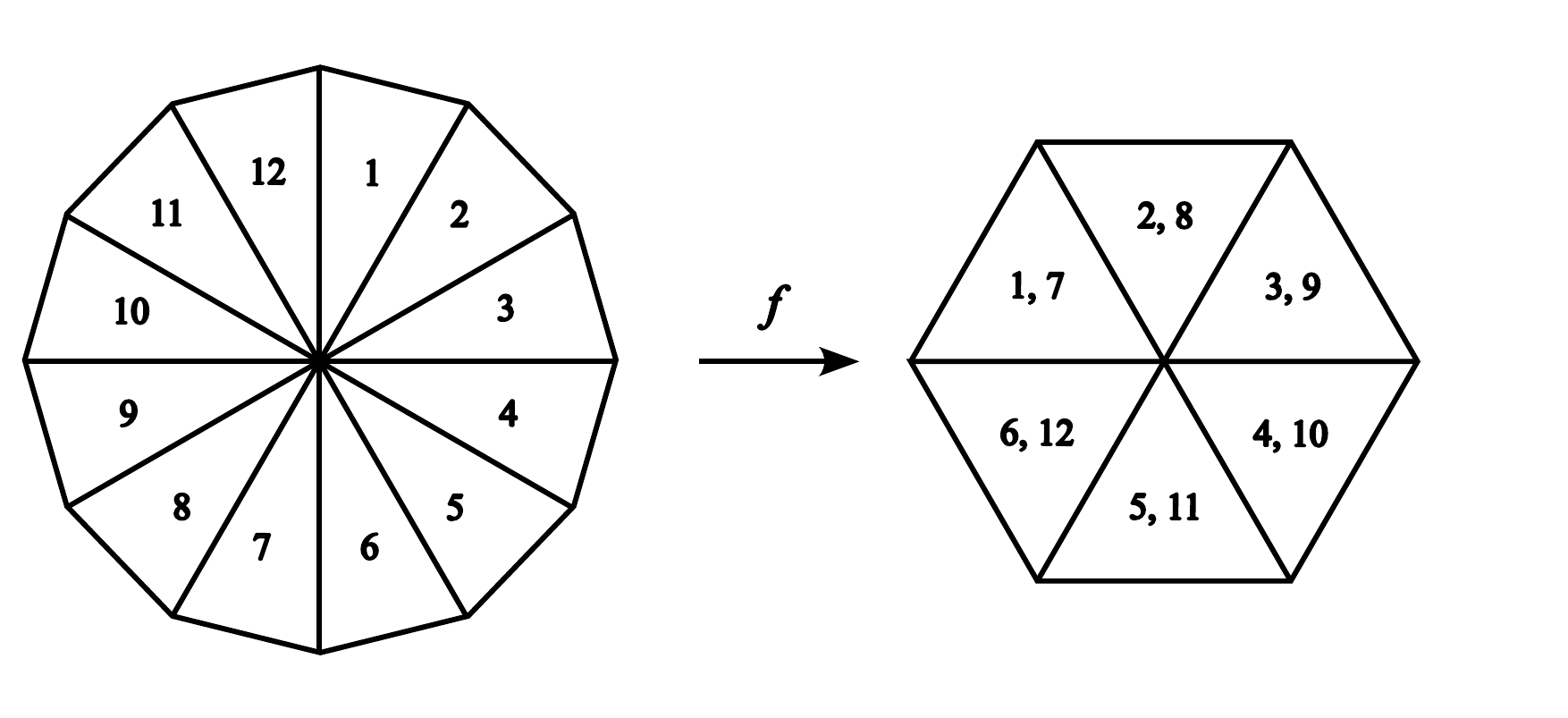}
    \caption{A simplicial complex which has a simplicial map to the triangular grid but violates \Cref{def_polygon} (3).}
    \label{fig_good_eg_2}
\end{figure}

\begin{figure}[ht]
    \centering
    \includegraphics[width = 0.7 \textwidth]{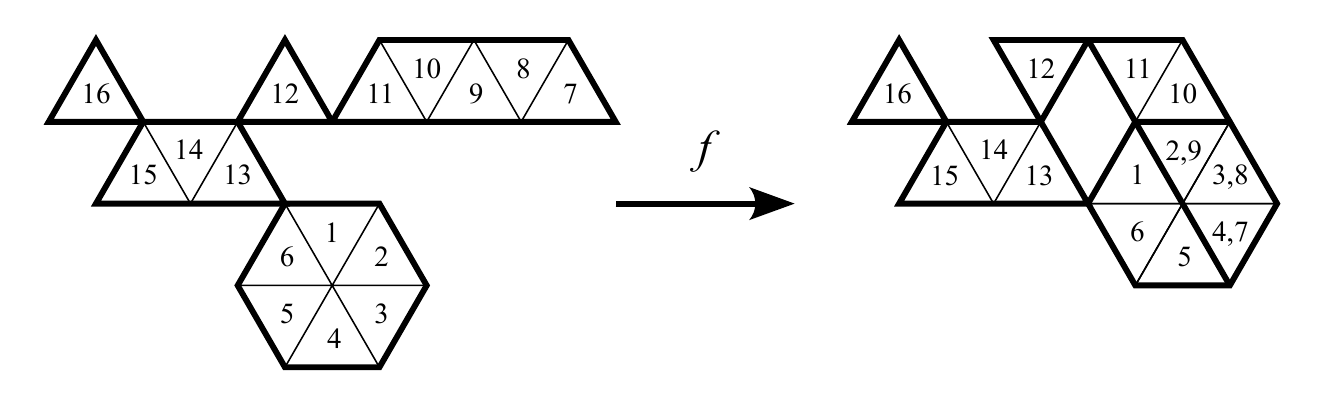}
    \caption{A valid generalized polygon. There are $5$ components. Observe that $f$ maps the simplicial complex to the triangular grid in a way that overlaps some $2$-simplices.}
    \label{fig_good_eg}
\end{figure}

In particular, observe that simple grid polygons can be viewed as generalized grid polygons. See \Cref{fig_bad_eg,fig_bad_eg_2,fig_good_eg_2,fig_good_eg} for some non-examples and examples of generalized grid polygons. For two generalized grid polygons $X$ and $Y$ with simplicial maps $f$ and $g$ and boundary vertices $x \in X$ and $y \in Y$, the \defterm{wedge} $X \wedge Y$ of $X$ and $Y$ is a generalized grid polygon which is the wedge of the simplicial complexes $X$ and $Y$ at basepoints $x$ and $y$, with simplicial map $h: X \wedge Y \to T$ such that for any $z \in X$, $h(z) = f(z)$ and for any $w \in Y$, $h(w) = g(w) - g(y) + f(x)$. 

We call a generalized grid polygon \defterm{indecomposable} if it is not the wedge of any two generalized grid polygons and \defterm{decomposable} otherwise. Note that a generalized grid polygon is indecomposable if and only if it is homeomorphic to a disk. A decomposable generalized grid polygon is the wedge of some indecomposable generalized grid polygons along boundary vertices, which we call its \defterm{components}. Furthermore, a component is an \defterm{outermost component} if it is wedged to the rest of the generalized grid polygon at a single point. Consider the graph whose vertices are the components, with an edge between any two components that are wedged together. This graph is a tree by the simply-connectedness of a generalized grid polygon, and its leaves are the outermost components. We denote by $\comps(P)$ the number of components of a generalized grid polygon $P$. 

For a $1$-simplex on the boundary of a generalized grid polygon, we view its image (a pane in the triangular grid) as a vector placed at the pane and oriented so that its right side is the interior of the polygon. We say that $(b_1, \ldots, b_n)$ is a \defterm{loop of panes} if each $b_i$ is a pane in the triangular grid and the head of $b_i$ is at the tail of $b_{i + 1}$ (indices taken modulo $n$). Note that the image of the boundary of a generalized grid polygon under the map $f$ given in \Cref{def_polygon} is a loop of panes which can be taken to wind clockwise around the image of each component.

We define the \defterm{billiards permutation} on a generalized grid polygon $P$ in a way that extends Defant and Jiradilok's \cite{defant2023} billiards permutation for simple grid polygons. For any $1$-simplex on the boundary of a generalized grid polygon, emit a laser beam from the midpoint of the $1$-simplex into the interior of $P$ so that, when viewed under the map $f$, the laser beam makes a $60^{\circ}$ angle with the pane and continues in the direction as specified in \Cref{sec_intro}. It keeps going in the interior of (the geometric realization of) $P$ in such a way that its image under $f$ is a straight segment, until it hits another boundary $1$-simplex. It can be checked that this defines a permutation which agrees with the billiards permutation of Defant and Jiradilok in the case of simple grid polygons. Observe that there are no $2$-cycles in a billiards permutation. 

To define the billiards system on generalized grid polygons, we first give a \defterm{labeling} of the $1$-simplices of a generalized grid polygon as follows. Label a $1$-simplex whose image under $f$ forms a $0^{\circ}$ ($60^{\circ}$, $120^{\circ}$, respectively) angle with the positive $x$-axis by $1$ ($2$, $3$, respectively). Notice that each $2$-simplex has its edges labeled $1, 2, 3$ in clockwise order. 

The billiards permutation can be described fully at the level of the simplicial complex, as long as a labeling is given. For a light beam coming into a $2$-simplex labeled $i$, it hits the $1$-simplex labeled $i - 1$ in the same $2$-simplex if it is mapped under $f$ to an upward-pointing triangle and the $1$-simplex labeled $i + 1$ in the same $2$-simplex if it is mapped under $f$ to a downward-pointing triangle (taken modulo $3$). If the $1$-simplex it hits is not on the boundary, the light beam travels into the adjacent $2$-simplex. Otherwise, it bounces off and heads into the same $2$-simplex. 

An important fact is that the boundary alone is not enough to determine the cycle structure of a generalized grid polygon: there exist generalized grid polygons with the same boundary but different (valid) billiards permutations (see \Cref{fig_two_possibilities}). 

\begin{figure}[ht]
    \centering
    \includegraphics[width=0.8 \textwidth]{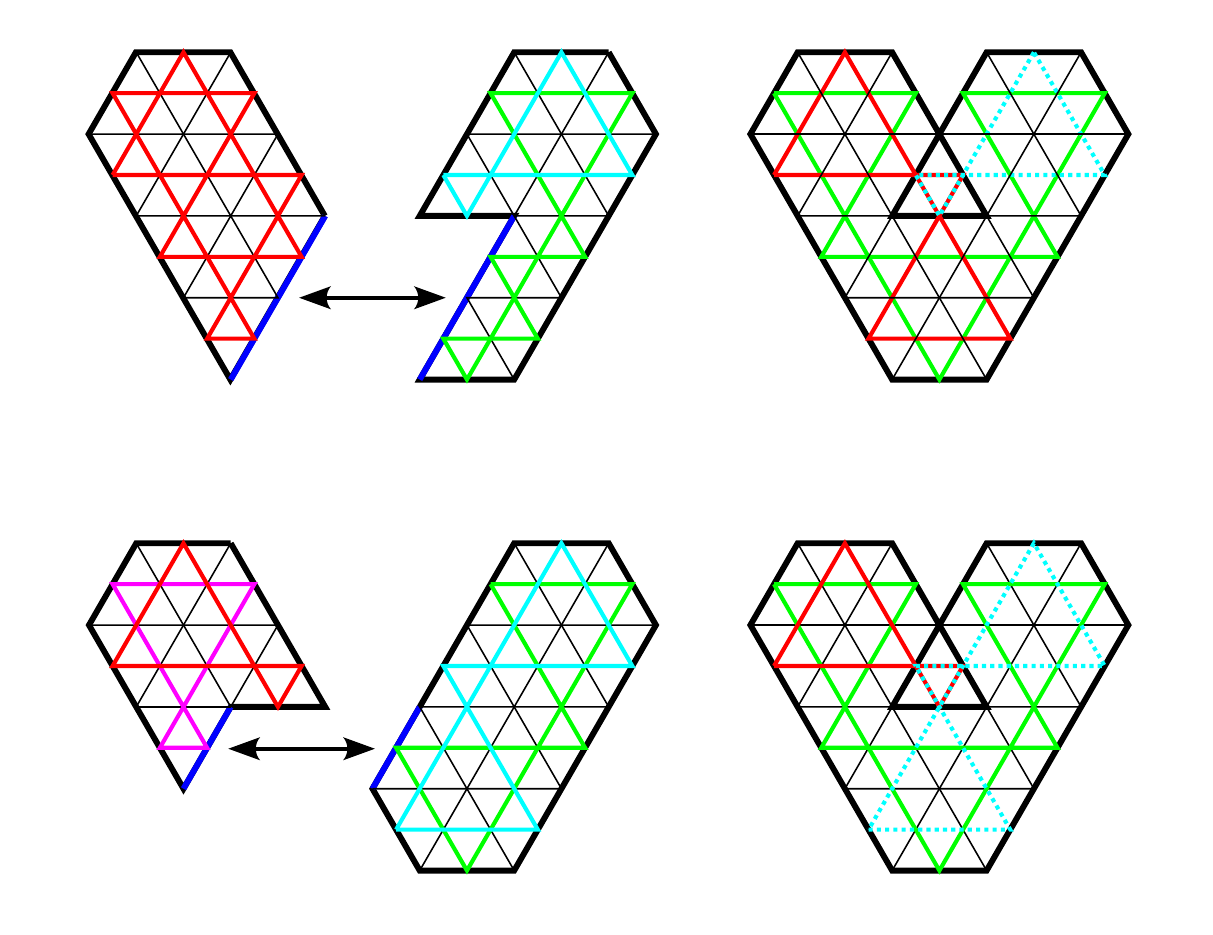}
    \caption{Two grid polygons with the same boundary but different billiards permutations. They can each be seen as gluing two simple polygons together along the blue segments. This means that they are different as simplicial complexes.}
    \label{fig_two_possibilities}
\end{figure}

A \defterm{horizontal strip} is a generalized grid polygon whose image is a trapezoid of ``height'' one whose parallel sides are horizontal. By \Cref{def_polygon} (3), any $2$-simplex $\Delta$ of $P$ is adjacent to at most one $2$-simplex on its left and at most one $2$-simplex on its right (sharing its edges labeled $2$ or $3$). Thus, starting with $\Delta$, we can find all the $2$-simplices connected to it in this way, and call the subcomplex spanned by them $h_{\Delta}$. Observe that $h_{\Delta}$ is a horizontal strip. We call it a \defterm{horizontal strip of $P$}. Then each $2$-simplex of $P$ belongs to exactly one horizontal strip of $P$. Furthermore, a horizontal strip of $P$ is precisely the set of $2$-simplices crossed by a west-going beam of $\pi$. We show that there is a tree structure on the set of horizontal strips of $P$.
\begin{lemma}\label{lem_tree_of_strips}
    Let $P$ be an indecomposable generalized grid polygon and let $S$ be the set of horizontal strips of $P$. Let $\Gamma_{S}$ be the following edge-labeled graph on $S$. For each pair of distinct horizontal strips $h_1, h_2 \in S$, add an edge between $h_1$ and $h_2$ labeled by the multiset $\{p_1, p_2\}$ for each maximal horizontal path $p_1 = p_2 \subset h_1 \cap h_2$. Then $\Gamma_{S}$ is a tree. In particular, there is at most one edge between any two vertices.
\end{lemma}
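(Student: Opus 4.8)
The plan is to show that $\Gamma_S$ is connected and has exactly $|S|-1$ edges; since a connected multigraph on $|S|$ vertices with $|S|-1$ edges is a tree, this also gives the ``at most one edge between two vertices'' statement. Connectedness is easy: as $P$ is indecomposable it is homeomorphic to a disk, hence a connected triangulated surface-with-boundary, so the graph on its $2$-simplices with an edge for each shared $1$-simplex is connected; and any two $2$-simplices sharing a $1$-simplex either lie in a common horizontal strip or lie in two strips joined by an edge of $\Gamma_S$. The edge count will come from an Euler-characteristic computation. Morally, $|P|$ is homotopy equivalent to $\Gamma_S$ (glue the contractible trapezoids $h$ along the contractible paths recorded by $\Gamma_S$), but it is cleanest to extract only the Euler characteristic.

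The crux is a local identity at each vertex. For a vertex $v$ of $P$, let $d(v)$ be the number of horizontal strips of $P$ containing a $2$-simplex incident to $v$. I claim $d(v)-1$ equals the number of edges of $\Gamma_S$ whose path passes through $v$. The $2$-simplices incident to $v$ form a cyclic sequence if $v$ is interior (using \Cref{def_polygon}(3), and one checks that $f$ is injective on this star) and a linear sequence if $v$ is on the boundary, consecutive simplices sharing a $1$-simplex at $v$. Because each strip is embedded by $f$ as a height-one trapezoid, the $2$-simplices of a fixed strip incident to $v$ form a single contiguous block of this sequence, all in one of the two horizontal bands through $v$; so the sequence splits into exactly $d(v)$ ``runs,'' one per strip. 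Consecutive runs lie in different bands, so the $1$-simplex between them is horizontal and interior, hence lies on exactly one edge of $\Gamma_S$ (an interior horizontal $1$-simplex lies in exactly two $2$-simplices, one in each of two distinct strips). For a boundary vertex the runs are linearly ordered, the $d(v)-1$ transitions occur at $d(v)-1$ distinct interior horizontal $1$-simplices belonging to $d(v)-1$ distinct pairs of strips, hence to $d(v)-1$ distinct edges of $\Gamma_S$ through $v$, and every edge of $\Gamma_S$ through $v$ arises from a $1$-simplex at $v$ in this way. For an interior vertex one has $d(v)=2$, and the two transitions occur at the two interior horizontal $1$-simplices at $v$, but these lie in the same pair of strips and hence on a single edge of $\Gamma_S$ through $v$; so again the count is $d(v)-1=1$. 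I expect this case analysis --- ruling out that a strip contributes two separate runs at a boundary vertex, and handling the various boundary configurations --- to be the main obstacle; this is exactly where embeddedness of strips and the diamond condition of \Cref{def_polygon}(4) get used.

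With the identity in hand, compute $\chi(|P|)$ by inclusion--exclusion over the cover of $P$ by its horizontal strips (subcomplexes covering all simplices). Each strip is a contractible subcomplex, so contributes Euler characteristic $1$; each pairwise intersection is at most $1$-dimensional; and each intersection of three or more strips is $0$-dimensional, since a $1$-simplex lies in at most two strips. Collecting terms, the $1$-simplex contributions sum to the number of interior horizontal $1$-simplices, which is $\sum_{\epsilon\in E(\Gamma_S)}|\epsilon|$, the total length of the paths of $\Gamma_S$. The $0$-simplex contributions sum to $\sum_v\bigl(d(v)-1\bigr)$, which by the local identity equals $\sum_{\epsilon\in E(\Gamma_S)}\bigl(|\epsilon|+1\bigr)$, since a maximal horizontal path $\epsilon$ has $|\epsilon|+1$ vertices. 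Hence $\chi(|P|)=|S|-|E(\Gamma_S)|$. Since $P$ is indecomposable, $|P|$ is a disk and $\chi(|P|)=1$, so $|E(\Gamma_S)|=|S|-1$; together with connectedness this shows $\Gamma_S$ is a tree.
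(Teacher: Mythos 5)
Your proof is correct, but it takes a genuinely different route from the paper's. The paper argues edge-by-edge: it shows that the maximal path labelling any edge of $\Gamma_S$ runs from one boundary vertex of $P$ to another with its interior in the interior of $P$ (the key step being that an interior vertex cannot be an endpoint of such a path, by \Cref{def_polygon}(3)), so by the Jordan curve theorem each such path is a separating chord of the disk; hence every edge of $\Gamma_S$ is a bridge, and a connected graph all of whose edges are bridges is a tree. You instead prove connectedness directly via strong connectivity of the triangulated disk and then pin down the edge count exactly, showing $|E(\Gamma_S)|=|S|-1$ by an Euler-characteristic bookkeeping argument whose engine is the local identity $d(v)-1=\#\{\text{edges of }\Gamma_S\text{ through }v\}$. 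The two proofs end up doing closely related local work --- your contiguity-of-runs analysis at a vertex is essentially the same hexagon/fan structure the paper exploits, and it is what forces interior vertices of a labelling path to be interior vertices of $P$ --- but they package it differently. The paper's separation argument is shorter and leaves connectedness of $\Gamma_S$ implicit; yours is longer and has more case analysis to discharge (contiguity of each strip's triangles around a vertex, distinctness of the strip-pairs at consecutive transitions, injectivity of $f$ on a strip), but it makes connectedness explicit and yields the sharper quantitative statement $|E(\Gamma_S)|=|S|-\chi(|P|)$, which would adapt to non-simply-connected regions where the separation argument breaks down. I verified the delicate points you flagged: a strip's triangles at $v$ are consecutive in its left-to-right chain and consecutive ones share an edge containing $v$, so each strip does contribute a single run; and at an interior vertex the two transition panes lie on the same maximal path, so the count $d(v)-1=1$ comes out right. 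The argument goes through.
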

The reason for this seemingly redundant notation is that we would also like to reconstruct a generalized grid polygon from the information contained in a graph of the form $\Gamma_{S}$ (the horizontal strips as the vertices and the gluing information as the edge labels), whose horizontal strips are not known a priori to share $1$-simplices (see \Cref{lem_tree_reconstruction}). 
\begin{proof}
    Consider any edge in $\Gamma_{S}$ labeled $\{p_1, p_2\}$ so that $p_1 = p_2 \subset h_1 \cap h_2$. We claim that the endpoints of $p_1$ are on the boundary of $P$ and the other vertices of $p_1$ are in the interior of $P$. For a vertex $v$ in the interior of $p_1$, the two horizontal $1$-simplices incident to it are in the interior of $P$ because they are shared by $h_1$ and $h_2$, and the other $1$-simplices incident to $v$ are either in the interior of $h_1$ or that of $h_2$.

    \begin{figure}[ht]
        \centering
        \includegraphics[width = 0.4 \textwidth]{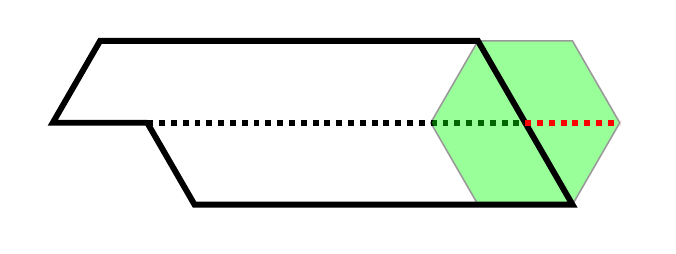}
        \caption{If $v$ is an interior vertex, \Cref{def_polygon} (3) implies that it is contained in a hexagon, which means $v$ cannot be an endpoint in the path.}
        \label{fig_jordan_curve}
    \end{figure}
    
    Now suppose $v$ is an endpoint of $p_1$. For the sake of contradiction, assume that it is an interior vertex. Then by (3) of \Cref{def_polygon} it must be contained in exactly six $2$-simplices whose images under the simplicial map form a unit hexagon. However, this means that there are exactly two horizontal strips that contain $v$ (each containing three of the $2$-simplices containing $v$). These horizontal strips share both horizontal $1$-simplices incident to $v$, contradicting the assumption that $v$ is an endpoint (see \Cref{fig_jordan_curve}). Hence, the horizontal path joins two boundary vertices of $P$, with the interior of the path in the interior of $P$. Since $P$ is indecomposable, it is homeomorphic to a disk. Then by the Jordan curve theorem, if we remove this horizontal path, we disconnect $P$. It follows that $\Gamma_{S}$ must be a tree. 
\end{proof}

We can generalize this tree structure on the horizontal strips to a decomposable generalized grid polygon $P$. For each component, construct the tree according to \Cref{lem_tree_of_strips}. Now for any pair of components wedged together at point $x$, choose a horizontal strip containing $x$ in both components (this may not be unique). Then add an edge between these two horizontal strips labeled $\{x, x, \alpha\}$, where $\alpha \in [4]$ specifies one of the four possible orientations the strips are attached together. Call this edge-labeled graph a \defterm{tree of horizontal strips} of $P$. Conversely, if we are given a tree of horizontal strips, we can reconstruct a generalized grid polygon. 

\begin{lemma}\label{lem_tree_reconstruction}
    Let $S$ be a set of horizontal strips. Let $\Gamma_{S}$ be an undirected, edge-labeled graph on vertex set $S$, where an edge $h_1 h_2$ is labeled by $\{p_1 \subset h_1, p_2 \subset h_2\}$ such that $p_1, p_2$ are horizontal paths of $1$-simplices of the same length, or by $\{x, y, \alpha\}$ such that $x$ is a vertex of $h_1$, $y$ is a vertex of $h_2$, and $\alpha \in [4]$ specifies one of four possible orientations. Suppose the following hold:
    \begin{enumerate}
        \item each $1$-simplex of each horizontal strip of $S$ appears at most once in the horizontal paths in the edge labels; and
        \item the graph $\Gamma_{S}$ is a tree.
    \end{enumerate}
    Then there exists a generalized grid polygon $P$ such that $\Gamma_{S}$ is a tree of horizontal strips of $P$.
\end{lemma}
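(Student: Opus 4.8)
# Proof Proposal for Lemma~\ref{lem_tree_reconstruction}

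The plan is to reconstruct $P$ by building it up one horizontal strip at a time, following the tree structure of $\Gamma_S$. First I would root the tree $\Gamma_S$ at an arbitrary horizontal strip $h_0 \in S$ and process the strips in an order consistent with breadth-first (or depth-first) search, so that each strip other than $h_0$ has a unique parent. At each stage I will have a partially-built simplicial complex $X_k$ together with a simplicial map $f_k \colon X_k \to T$, obtained by gluing the first $k$ strips according to the edge labels encountered so far. The gluing operation is the key primitive: given an edge labeled $\{p_1 \subset h_1,\, p_2 \subset h_2\}$, I identify the $1$-simplices (and their endpoints) of $p_1$ with those of $p_2$ in the one length-preserving orientation that makes the two strips lie on opposite sides of the shared path (this is what ``whose images under $f$ form a diamond'' in \Cref{def_polygon}~(4) forces); given an edge labeled $\{x, y, \alpha\}$, I simply identify the single vertex $x$ with $y$, using $\alpha$ to record which of the four rotational placements of $h_2$'s image is intended, then define $f_k$ on the new strip by translating (and, as dictated by $\alpha$, rotating by a multiple of $60^\circ$ about the shared point) so that the image of the shared simplex or vertex matches.

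The bulk of the argument is then to verify that the resulting $X = X_{|S|}$ with map $f = f_{|S|}$ satisfies all four conditions of \Cref{def_polygon}. Condition~(2) (dimension preservation) is immediate from the construction since each strip maps to a trapezoid and gluings only identify simplices with simplices of the same dimension. Condition~(1) — that the geometric realization is a wedge of disks — I would prove by induction on $k$: gluing a new strip along a single path (a contractible set, since hypothesis~(1) guarantees the path appears only once and strips glued along paths are, within a component, exactly the indecomposable pieces) keeps each component homeomorphic to a disk, while gluing along a single vertex introduces a wedge point; since $\Gamma_S$ is a tree (hypothesis~(2)), no cycles are ever created, so $X$ stays simply-connected and is a wedge of disks. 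For conditions~(3) and~(4), I would observe that an interior vertex $v$ of the final complex arises precisely when the $2$-simplices around $v$ contributed by several strips close up into a full hexagon; the path-gluing rule (opposite sides, diamond condition) is exactly what guarantees that whenever a horizontal $1$-simplex becomes interior it is flanked by two $2$-simplices forming a diamond, and that the link of any newly-interior vertex is a $6$-cycle rather than something folded or incomplete. Here I would need a short case analysis on how many of the up-to-six triangles around $v$ come from the parent strip versus children, using the fact that a single horizontal strip already contributes a ``bowtie'' of two triangles at each of its non-endpoint vertices.

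The main obstacle I anticipate is condition~(3) in the presence of the vertex-gluings labeled $\{x,y,\alpha\}$: a priori, after identifying $x$ with $y$, the point could end up surrounded by twelve or more triangles from the two components, which would violate homogeneity-of-six. The resolution is that such a wedge point is by definition a \emph{boundary} vertex of each component (it is the basepoint at which an outermost-type gluing occurs), so condition~(3) only constrains \emph{interior} vertices and is vacuous there; I must check carefully that the construction never forces a vertex-glued point to be interior, which follows because a vertex-glue does not identify any $1$-simplices, so each $1$-simplex incident to $x$ or $y$ that was on the boundary of its component remains on the boundary of $X$. A secondary subtlety is well-definedness of $f$ across chains of gluings — one must confirm that following two different paths in the tree from $h_0$ to a given strip yields the same translation/rotation; but since $\Gamma_S$ is a tree there is only one path, so this is automatic. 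Finally, I would note that unwinding the construction shows $\Gamma_S$ is literally recovered as a tree of horizontal strips of the $P$ just built, because the horizontal strips of $P$ are exactly the maximal left–right chains of $2$-simplices, which are precisely the strips of $S$ we glued in, and the gluing data we used reappears as the edge labels.
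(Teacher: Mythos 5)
Your proposal is correct and follows essentially the same route as the paper: form the complex by gluing the strips along the labeled paths (or wedge points) in tree order, recover the simplicial map $f$ inductively from a fixed base strip, and verify the four conditions of \Cref{def_polygon}, with condition (3) reducing to the observation that a vertex is interior exactly when it lies in the interior of a path in an edge label (hence collects three $2$-simplices from each of the two strips sharing it). Your extra care about wedge points remaining boundary vertices and about well-definedness along the unique tree path is a more explicit version of what the paper's terse argument already relies on.
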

\begin{proof}
    We can form a simplicial complex on the union of the elements of $S$ by identifying $p_1$ with $p_2$ for each edge labeled $\{p_1, p_2\}$. We arbitrarily fix the image of some horizontal strip under the simplicial map. Using \Cref{def_polygon} (4), we can recover $f$ inductively. A vertex is an interior vertex if and only if it is in the interior of some path in an edge label. This means that it is contained in exactly six $2$-simplices, three from each horizontal strip it is contained in. This shows that \Cref{def_polygon} (3) holds. \Cref{def_polygon} (2) holds trivially and (1) holds inductively (each time we add a new horizontal strip to our tree, the simplicial complex remains homeomorphic to a wedge of disks).
\end{proof}

\begin{figure}[ht]
    \centering
    \includegraphics[width = 0.8 \textwidth]{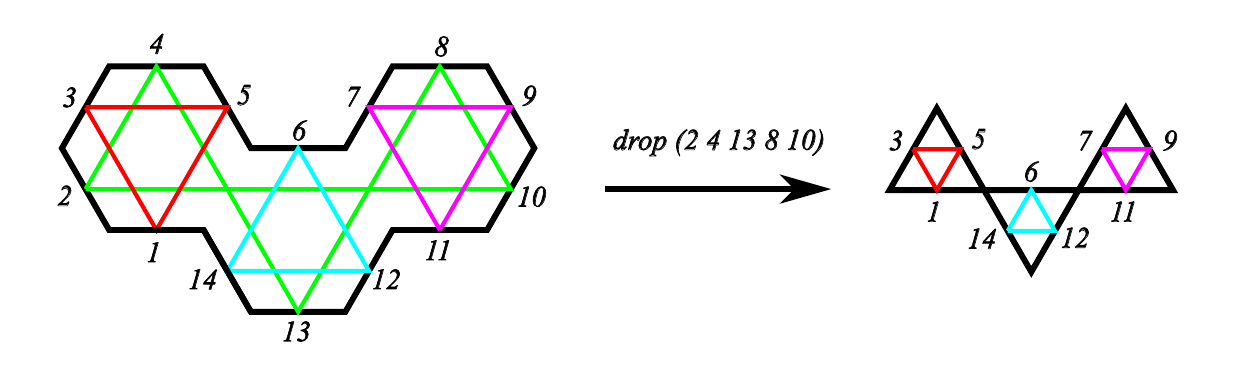}
    \caption{In this diagram, if we drop the green cycle, we end up with another generalized grid polygon. Note that the remaining cycles and boundary panes are preserved.}
    \label{fig_forest_poly}
\end{figure} 

This section culminates with a crucial result about generalized grid polygons that helps us prove \Cref{thm_perim}: we can ``drop'' a cycle from the billiards permutation of a generalized grid polygon and end up with another generalized grid polygon (see \Cref{fig_forest_poly}). 

\begin{theorem}\label{thm_dropping_cycle}
    Let $P$ be a generalized grid polygon with boundary loop $(b_1, \ldots, b_n)$ and billiards permutation $\pi$. Let $c = (i_1 \; \cdots \; i_k)$ be a cycle in $\pi$. Then, there exists a generalized grid polygon with boundary $(b'_1, \ldots, b'_m)$ and billiards permutation $\pi|_{[n] \setminus \{i_1, \ldots, i_k\}}$, such that when viewed as vectors, $b'_1, \ldots, b'_m$ are the same as $b_j$ for each $j \notin \{i_1, \ldots, i_k\}$ in index order. We denote this generalized grid polygon by $P - c$. 
\end{theorem}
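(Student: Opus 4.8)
The billiards trajectory of the cycle $c$ is a closed curve $\gamma$ in the geometric realization $|P|$, made up of $k$ maximal straight runs $R_1,\dots,R_k$ (straight under $f$), where $R_t$ runs from the midpoint of $b_{i_t}$ to the midpoint of $b_{i_{t+1}}$ (indices mod $k$) and travels in one of the three directional families (west, or one of the two obtained by rotating $\pm 120^{\circ}$). By the remark identifying a horizontal strip of $P$ with the set of $2$-simplices crossed by a west-going beam — and its two rotations — each run $R_t$ sweeps out exactly one strip $s_t$ of $P$ in its family, entering and leaving through the two ends of $s_t$. Two consecutive runs $R_{t-1},R_t$ meet inside the unique $2$-simplex $\Delta_t$ incident to $b_{i_t}$, which is simultaneously the last $2$-simplex of $s_{t-1}$ and the first of $s_t$; there the curve $\gamma$ forms a ``V'' with vertex at the midpoint of $b_{i_t}$. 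The plan is to cut $|P|$ along $\gamma$, discard the pieces lying against the panes $b_{i_1},\dots,b_{i_k}$, keep the piece on the interior side of $\gamma$, and recognize it as a generalized grid polygon using the reconstruction lemma \Cref{lem_tree_reconstruction}.

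\textbf{The local picture and the reconstruction.} The crux is to analyze $\gamma$ strip by strip. Fix a horizontal strip $h$ of $P$; it is a height-one trapezoid, and each portion of $\gamma$ inside $h$ is a straight segment whose direction differs from the axis of $h$ by $\pm 60^{\circ}$, so it cuts $h$ across from one long side to the other — except at the two ends of $h$ (where an end-edge may be one of the $b_{i_t}$) and at a $2$-simplex $\Delta_t\subset h$ (where two such segments meet in a V at the midpoint of $b_{i_t}$). I will show these finitely many segments can be linearly ordered across $h$: segments coming from runs in $h$'s own family are genuinely parallel, and the remaining ones are controlled by the Jordan-curve argument already used in the proof of \Cref{lem_tree_of_strips}. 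Consequently ``the part of $h$ on the interior side of all the cuts'' is a well-defined, possibly empty sub-trapezoid $h^{-}$ whose long sides are sub-paths of the long sides of $h$. Doing this over all horizontal strips of $P$, and recording how the $h^{-}$ inherit the gluing paths and wedge data of a tree of horizontal strips of $P$ (a gluing path entirely on the kept side survives; one cut by $\gamma$ is shortened identically on the two sides; wedge edges likewise; strips that become empty are contracted), I obtain an edge-labeled graph on the nonempty $h^{-}$. One checks that each $1$-simplex still occurs at most once among the edge labels and that contraction keeps the graph a tree, so \Cref{lem_tree_reconstruction} produces a generalized grid polygon $Q$.

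\textbf{Identifying $Q$.} It remains to verify that $Q$ has the required boundary and billiards permutation. For the boundary: cutting along a single run $R_t$ removes the boundary arc of $P$ lying on the $\gamma$-side of $R_t$ — an arc starting at $b_{i_t}$ and ending at $b_{i_{t+1}}$ — and, after re-triangulation, the cut contributes exactly the vectors $b_j$ with $j$ removed while leaving every surviving $b_j$ untouched and in the same cyclic position; hence the boundary loop of $Q$ is $(b_j)_{j\notin\{i_1,\dots,i_k\}}$ read in index order. For the permutation: by the local description of the billiards dynamics given before \Cref{lem_tree_of_strips}, a beam's itinerary depends only on the up/down type and label of each $2$-simplex and on which neighbor sits across each non-boundary $1$-simplex. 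The operation above preserves the type and labels of every surviving $2$-simplex and every adjacency among them except those severed along $\gamma$; and a beam of $\pi$ whose cycle is not $c$ never crosses a severed adjacency, since it lies on the interior side of $\gamma$ — when it enters a shrunk strip it simply traverses the shorter trapezoid and exits at the same surviving boundary pane. Therefore $\pi_Q=\pi|_{[n]\setminus\{i_1,\dots,i_k\}}$, and we set $P-c:=Q$.

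\textbf{Main obstacle.} The hard part is the second step: making ``cut along $\gamma$ and keep the interior side'' into a genuinely consistent local modification of the tree of horizontal strips. The delicate points are (i) the $2$-simplices $\Delta_t$, where the cut is a V rather than a clean excised row, so the local modification re-glues two shrunk strips or re-positions a wedge instead of just deleting a cell; (ii) strips met by $\gamma$ more than once or by runs of more than one family, where the linear ordering of the cuts must be proved consistent; and (iii) the possibility that $\gamma$ is not a simple curve, so that ``the interior side of $\gamma$'' must be set up strip by strip rather than globally. Once the modified tree of strips is shown to satisfy the hypotheses of \Cref{lem_tree_reconstruction}, the checks on the boundary loop and on the permutation are routine.
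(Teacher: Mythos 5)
Your overall architecture matches the paper's: modify each horizontal strip, re-glue via the tree of strips from \Cref{lem_tree_of_strips}, invoke \Cref{lem_tree_reconstruction}, and then check the boundary loop and the permutation by noting that surviving beams are unaffected. However, the step you defer as the ``main obstacle'' is the actual content of the proof, and the rule you propose for it --- keep ``the part of $h$ on the interior side of all the cuts,'' a sub-trapezoid of $h$ --- is not the right operation and I do not believe it can be made to work as stated. When $c$ crosses a strip $h$ several times, the cells to be discarded are interleaved along the whole length of $h$ with cells that must be kept (including cells crossed by beams of $c$ itself in the opposite transversal direction); the surviving polygon is not a sub-region of $|P|$ cut off by $\gamma$, nor is the kept part of $h$ a sub-trapezoid sitting inside $h$. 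The correct operation is a surgery: delete exactly the $2$-simplices of $h$ crossed by beams of $c$ traveling in the $60^{\circ}$ or $180^{\circ}$ directions, and re-concatenate what remains into a shorter strip. One then checks by a short case analysis that the four side lengths still form a (possibly degenerate) strip, and --- crucially for your permutation argument --- that this deletion yields the same strip as deleting the cells crossed by the $240^{\circ}$ and $180^{\circ}$ beams of $c$. That symmetric description is what lets you conclude that beams not in $c$ in \emph{both} non-horizontal directions are preserved; ``a beam on the interior side of $\gamma$ never crosses a severed adjacency'' is not available, since surviving beams routinely pass through the deleted-and-reglued locus.

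A second concrete gap: your local picture asserts that every portion of $\gamma$ inside $h$ meets the axis of $h$ at $\pm 60^{\circ}$. This omits the case where the west-going run of $c$ is the horizontal beam sweeping $h$ itself. Then $\gamma\cap h$ contains a segment running the length of $h$, the strip degenerates to height zero, and two separate verifications are needed: (i) that the top and bottom sides shrink to \emph{equal} lengths (this requires tracking how the horizontal beam bounces at the ends of $h$, with cases $p=q$ and $p=q\pm 1$); and (ii) that when the degenerate strip is contracted out of the tree, its neighborhood can be reconnected into a tree --- which uses the fact that two horizontal panes of $h$ identified by the collapse cannot both be boundary panes of $P$, as that would force a $2$-cycle in $\pi$. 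Your phrase ``strips that become empty are contracted'' and ``contraction keeps the graph a tree'' silently assumes both points. Until the strip-modification rule is stated in this deletion form and the degenerate case is handled, the appeal to \Cref{lem_tree_reconstruction} and the subsequent identification of the boundary and permutation do not go through.
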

The reason for the formalism introduced in this section is to prove this result. In order to show that we can drop a cycle, it suffices to describe how the process modifies the tree of horizontal strips of the generalized grid polygon $P$. In particular, we will modify each horizontal strip of $P$ independently and then glue them back together. We need to show that the resulting tree of horizontal strips is valid and it corresponds to the generalized grid polygon $P - c$ by \Cref{lem_tree_reconstruction}.

\begin{proof}
    Since any cycle is contained in a component, removing it does not affect other components. Thus, it suffices to prove the case that $P$ is an indecomposable generalized grid polygon.
    
    By \Cref{lem_tree_of_strips} and \Cref{lem_tree_reconstruction}, it suffices to describe how the horizontal strips of $P$ are modified under the process of dropping $c$ and show that when assembled back together, they have the correct boundary and cycle structure. Let $\Gamma_{S}$ be the tree of horizontal strips of $P$. 

    Let $h$ be a horizontal strip whose top side has length $p$ and whose bottom side has length $q$. We modify $h$ as follows: shorten each of the four sides by half the number of light beams in $c$ that are incident to it (see \Cref{fig_shorten_strip}). More formally, we remove the $2$-simplices of $h$ crossed by a beam of $c$ going in the $60^{\circ}$ direction or the $180^{\circ}$ direction (see \Cref{fig_shorten_strip_2}). 
    
    Note that the resulting object, $h'$, may no longer be a horizontal strip: if the horizontal light beam between the two non-horizontal sides of $h$ is a beam of $c$, then $h'$ will have height $0$. We call such an object a \defterm{degenerate horizontal strip}. We must also check that the resulting lengths of the four sides indeed form a (possibly degenerate) horizontal strip. 

    \begin{figure}[ht]
        \centering
        \includegraphics[width=0.6 \textwidth]{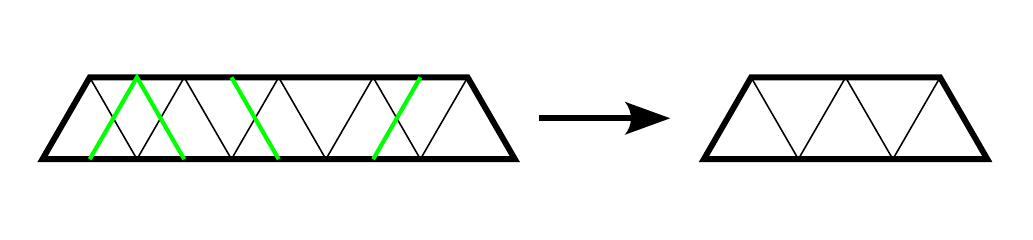}
        \caption{An example of how a horizontal strip is modified when a cycle $c$ is dropped. Here, both the top side and the bottom side are hit by $c$ four times, so they are shortened by two.}
        \label{fig_shorten_strip}
    \end{figure}
    
    \begin{enumerate}
        \item Suppose that the horizontal beam in $h$ is a beam of $c$. Then the two non-horizontal sides will have length zero, and we need to check that the top and bottom sides will have the same length. Indeed, if $p = q + 1$, then the horizontal beam will bounce up to hit the top side twice. The remaining beams hit the top and bottom sides exactly once. The case $p = q - 1$ follows the same argument. If $p = q$, then the horizontal beam will bounce to hit the top side and the bottom side once each. 
        \item Suppose that the horizontal beam is not in $c$. Then each beam hits the top side and the bottom side exactly once, which means that $p$ and $q$ will decrease by the same amount. 
    \end{enumerate}

    \begin{figure}[ht]
        \centering
        \includegraphics[width=0.6 \textwidth]{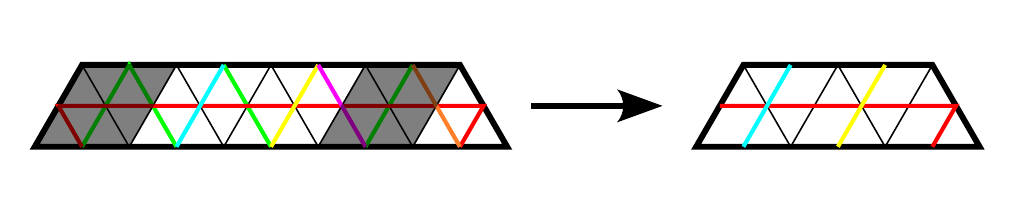}
        \caption{Another illustration of the modification process of $h$. The $2$-simplices crossed by beams of $c$ going in the $60^{\circ}$ direction or the $180^{\circ}$ direction are removed. Furthermore, the beams not in $c$ which go in the $60^{\circ}$ and $180^{\circ}$ directions are preserved.}
        \label{fig_shorten_strip_2}
    \end{figure}

    \begin{figure}[ht]
        \centering
        \includegraphics[width=0.7 \textwidth]{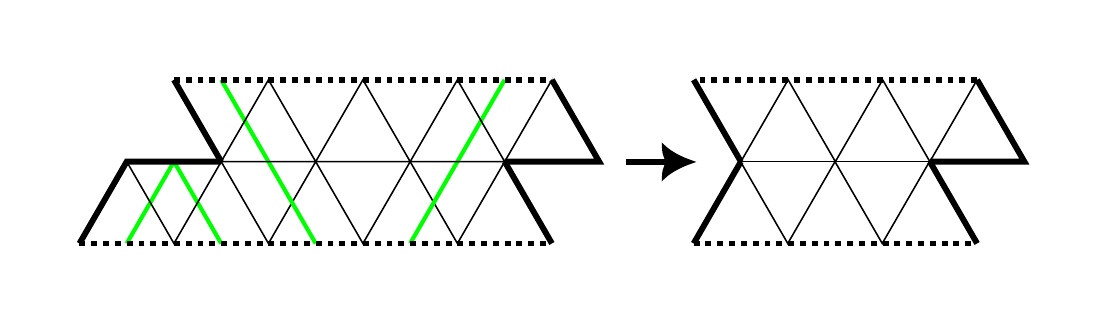}
        \caption{How two adjacent horizontal strips are modified.}
        \label{fig_two_strips_glued}
    \end{figure}

    Now we have a new set of (possibly degenerate) horizontal strips $S'$. We form $\Gamma_{S'}$ as follows. For an edge between $h_1, h_2$ labeled $\{p_1, p_2\}$ in $\Gamma_{S}$, add an edge between the modified strips $h'_1, h'_2$ labeled $\{p'_1, p'_2\}$ in $\Gamma_{S'}$, where $p'_1$ consists of the $1$-simplices of $p_1$ not hit by a beam of $c$ going in the $60^{\circ}$ direction or the $180^{\circ}$ direction (see \Cref{fig_two_strips_glued}). If $p'_1$ now has length zero (i.e., is a vertex), we add the orientation $\alpha$ to the edge label. 

    A complication is that our current tree of horizontal strips may contain degenerate horizontal strips, which must be removed while maintaining a valid tree structure.

    \begin{figure}[ht]
        \centering
        \includegraphics[width=0.8 \textwidth]{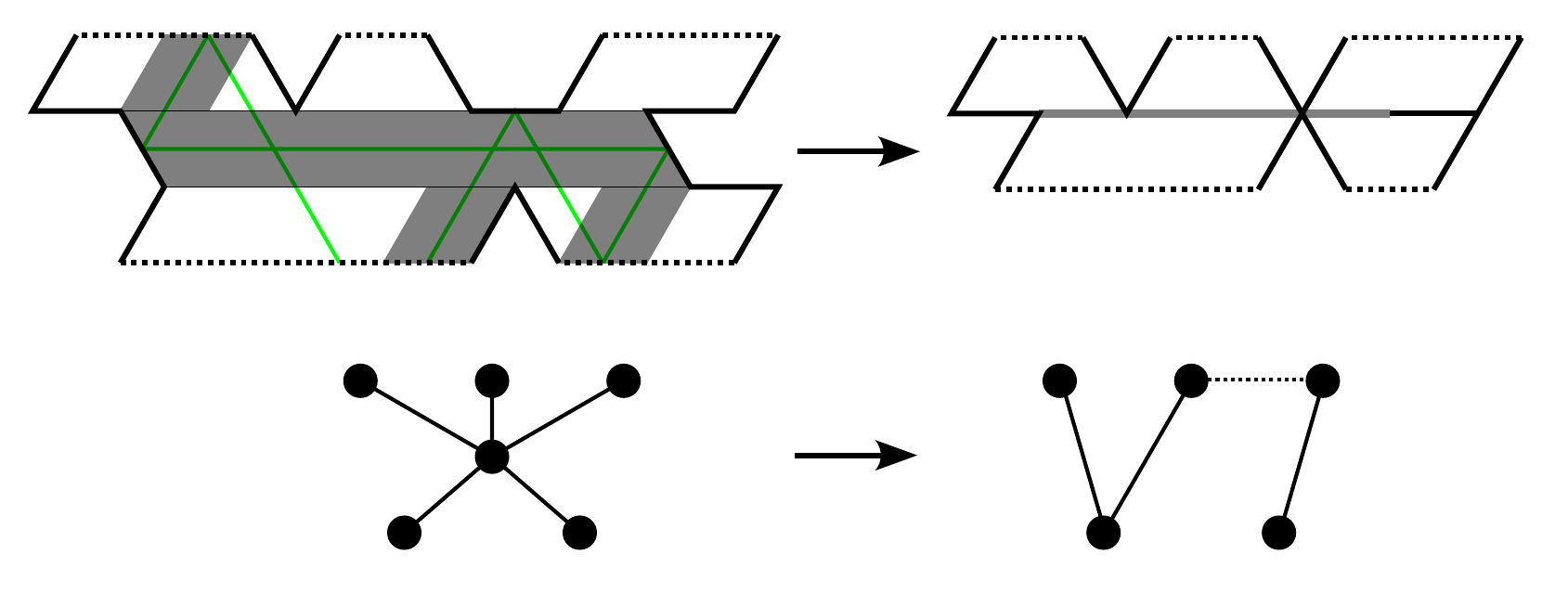}
        \caption{When we remove a degenerate strip from $\Gamma_{S'}$, we modify the neighborhood so that the graph remains a tree. The modified strips must remain connected and contractible, so it is possible to add edges labeled by a point and an orientation to make the graph connected (we added the dotted edge in the diagram).}
        \label{fig_delete_degenerate}
    \end{figure}
    
    Suppose $h$ is a horizontal strip of $P$ which becomes a degenerate horizontal strip $h'$. We remove $h'$ from $\Gamma_{S'}$ and modify the neighborhood of $h'$ to form a new graph $\Gamma_{S' \setminus \{h'\}}$. Let $N$ be the neighborhood of $h$ in $\Gamma_{S}$ and $N'$ the set of modified strips. Since $N$ is an independent set in $\Gamma_{S}$, we would like to show that $N'$ forms a tree in $\Gamma_{S' \setminus \{h'\}}$. We first add edges between horizontal strips labeled by maximal paths which collapse to the same path when $h$ is collapsed. Observe that collapsing $h$ does not change the contractibility of our simplicial complex, so this induced subgraph cannot have a cycle. Furthermore, for any horizontal panes $b$ and $b'$ of $h$ that are collapsed together, they cannot both be on the boundary of $P$ since that would create a $2$-cycle. This means that $N'$ is connected, so we can add edges labeled by single points and the appropriate orientation to make the induced subgraph on $N'$ connected (see \Cref{fig_delete_degenerate}).

    Let $Q$ denote the generalized grid polygon obtained from \Cref{lem_tree_reconstruction} using the modified tree structure above. We now show that $Q$ has boundary $(b'_1, \ldots, b'_m)$ and billiards permutation $\pi|_{[n] \setminus \{i_1, \ldots, i_k\}}$. The boundary of $Q$ is as desired because for any pane on the boundary of $P$, it is deleted if and only if it is a part of $c$, because the number of beams of $c$ hitting a boundary pane is either $2$ or $0$. It follows that the boundary of $Q$ is traced by the vectors corresponding to the boundary panes of $P$ not hit by $c$ in counterclockwise order. 

    \begin{figure}[ht]
        \centering
        \includegraphics[width=0.65 \textwidth]{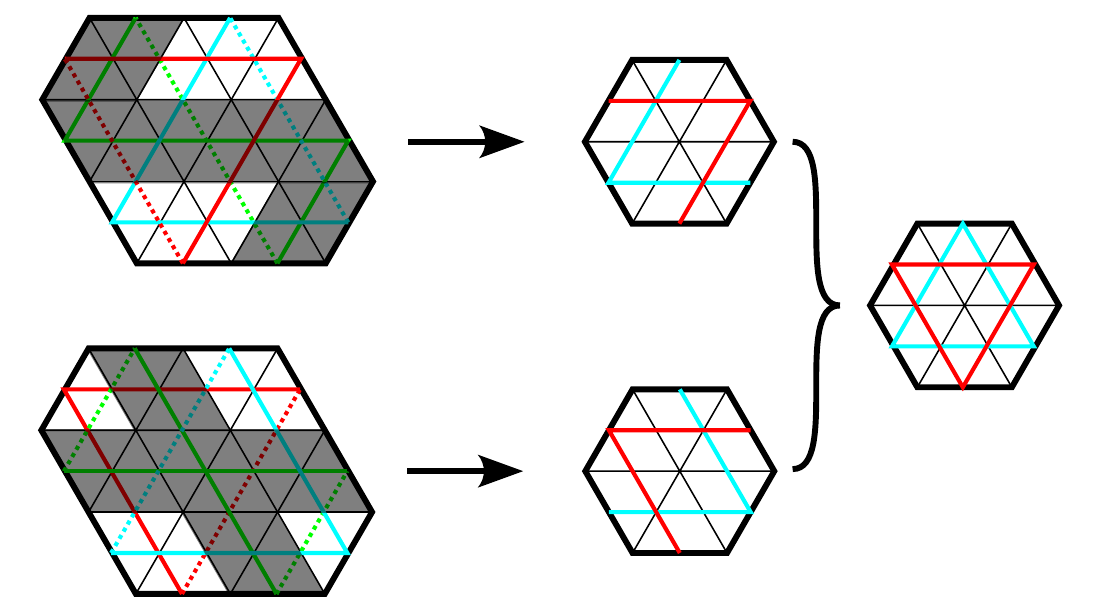}
        \caption{Two different ways of viewing the modification of horizontal strips recover the billiards permutation.}
        \label{fig_two_ways}
    \end{figure}
    
    To prove that the billiards permutation also comes from the restriction of $\pi$, let $b$ and $b'$ be two boundary panes of $P$ not hit by $c$. Suppose that $b$ is mapped to $b'$ by $\pi$. If the beam from $b$ to $b'$ is horizontal, then they are in the same horizontal strip, and remain in the same horizontal strip so that $b$ still maps to $b'$. 
    
    Now suppose the beam goes in the $60^{\circ}$ direction. Recall that we can view the modification of horizontal strips as deleting the $2$-simplices hit by $60^{\circ}$ and $180^{\circ}$ beams of $c$, which does not affect the $60^{\circ}$ beams not in $c$ (see \Cref{fig_shorten_strip_2}). Equivalently, we can view the modification of horizontal strips as deleting the $2$-simplices hit by $240^{\circ}$ and $180^{\circ}$ beams of $c$, which shows that the $240^{\circ}$ beams not in $c$ are also preserved (see \Cref{fig_two_ways}). This completes the proof. 
\end{proof}

\section{The perimeter of a grid polygon}\label{sec_perim}
In this section, we prove \Cref{thm_perim}. We do so by proving the following stronger theorem involving generalized grid polygons (\Cref{def_polygon}).
\begin{theorem}\label{thm_perim_strong}
    Let $P$ be a generalized grid polygon. Then
    \[
        \cyc(P) \leq \frac{1}{4}(\perim(P) + 2\comps(P)),
    \]
    where $\comps(P)$ is the number of components of $P$.
    Furthermore, if $P$ is indecomposable primitive, then 
    \[
        \cyc(P) = \frac{1}{4}(\perim(P) + 2)
    \]
    if and only if the billiards permutation of $P$ contains exactly two $3$-cycles and no cycles of length greater than $4$. 
\end{theorem}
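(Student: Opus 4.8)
The plan is to run an induction that peels off one light trajectory at a time using \Cref{thm_dropping_cycle}. First, observe that $\cyc$, $\perim$, and $\comps$ are all additive under the wedge operation on generalized grid polygons: a wedge point is a vertex, so it lies on no trajectory and every cycle stays inside a single component, whence $\cyc(X\wedge Y)=\cyc(X)+\cyc(Y)$, while $\perim(X\wedge Y)=\perim(X)+\perim(Y)$ and $\comps(X\wedge Y)=\comps(X)+\comps(Y)$ follow directly from the definitions. So I would prove the inequality ``$4\cyc(P)\le\perim(P)+2\comps(P)$'' for all generalized grid polygons by strong induction on $\cyc(P)$: the decomposable case is immediate by additivity applied to a nontrivial wedge decomposition (whose two factors each have strictly smaller $\cyc$), so assume $P$ indecomposable. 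If $\cyc(P)=1$ then $\perim(P)$ is the length $k\ge 3$ of the unique cycle and $4\le k+2$. If $\cyc(P)\ge 2$, pick a cycle $c$ of length $k$ and pass to $P-c$: it has $\cyc(P-c)=\cyc(P)-1$ and $\perim(P-c)=\perim(P)-k$ (each pane of $P$ belonging to $c$ is hit exactly twice by $c$, and these are exactly the deleted panes), and $d:=\comps(P-c)\ge 1$. Feeding the inductive hypothesis into $P-c$ gives $4\cyc(P)=4\cyc(P-c)+4\le\perim(P)-k+2d+4$, so it suffices to find a cycle $c$ with
\[
    2d \le k-2 .
\]

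The difficulty is that $2d\le k-2$ forces $d=0$ when $k=3$, i.e.\ the $3$-cycle must be all of $P$, and a unit hexagon (two $3$-cycles, $d=1$ for either) shows this can genuinely fail; so the argument must branch. \emph{Case (i): $P$ has a cycle of length $\ge 4$.} Here the plan is to produce a ``peelable'' such cycle, using the tree of horizontal strips of $P$ from \Cref{lem_tree_of_strips} together with the surgery in the proof of \Cref{thm_dropping_cycle} (which horizontal strips are shortened, which become degenerate, and which point-gluings their removal forces). One picks an outermost long cycle $c$, argues that its trajectory peels off a boundary layer, and bounds $d$ by $\lfloor k/2\rfloor-1$ in terms of the number of degenerate strips it creates. \emph{Case (ii): every cycle of $P$ is a $3$-cycle.} Then $\perim(P)=3\cyc(P)$, so it is enough to show $\cyc(P)\le 2$. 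The trajectory of a $3$-cycle is a lattice triangle with edges in the three beam directions, hence points either ``up'' or ``down''; one analyzes how two such triangles can sit inside the disk $P$ and uses indecomposability (equivalently the tree structure on horizontal strips) to conclude $P$ contains at most one ``up'' $3$-cycle and at most one ``down'' $3$-cycle, so $\cyc(P)\le 2$, with equality exactly when there is one of each.

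For the equality statement, ``$\Leftarrow$'' is arithmetic: if the billiards permutation has exactly two $3$-cycles, $f$ further cycles all of length $4$, and nothing longer, then $\cyc(P)=f+2$ and $\perim(P)=8+4f$, so $\cyc(P)=\tfrac14(\perim(P)+2)$. For ``$\Rightarrow$'', note $\cyc(P)=\tfrac14(\perim(P)+2)$ is equivalent to $\sum_c(4-k_c)=2$, so if $P$ had a cycle of length $\ge 5$ it would have at least three $3$-cycles. I would rule this out by tracking equality through the induction: equality at a step forces $2d=k-2$ (so the dropped cycle has even length and splits $P$ into exactly $d=k/2-1$ pieces, each again an equality case), and equality in Case (ii) forces exactly two $3$-cycles; primitivity is exactly what excludes the ``separating'' configuration that a long cycle coexisting with three or more $3$-cycles would produce, since after the surgery such a configuration leaves an interior pane cutting $P$ into two pieces. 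Hence equality forces exactly two $3$-cycles and no cycle of length greater than $4$.

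The main obstacle is the case split: constructing, in Case (i), an outermost long cycle whose removal satisfies $2d\le k-2$ (a careful analysis of the degenerate-strip surgery), and proving in Case (ii) that a polygon all of whose cycles are $3$-cycles has at most two of them. For the equality direction the delicate point is reconciling primitivity with the induction — one must either show primitivity descends to the pieces produced by the surgery or instead exhibit the separating pane directly — and I expect that bookkeeping to be the trickiest part.
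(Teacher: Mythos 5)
Your skeleton (reduce to the indecomposable case by wedge-additivity, drop a cycle via \Cref{thm_dropping_cycle}, and induct) is the paper's skeleton, but the quantitative step your induction hinges on --- finding a cycle $c$ of length $k\ge 4$ with $2\comps(P-c)\le k-2$ --- is not only left as a ``plan''; it is false. Take $P$ to be two unit hexagons glued along a single pane $e$ (a simple, indecomposable, non-primitive grid polygon with $\perim(P)=10$). Its billiards permutation has two $3$-cycles and one $4$-cycle (the two $3$-cycles that used $e$ merge into a $4$-cycle crossing $e$ twice), so $\cyc(P)=3=\tfrac14(\perim(P)+2)$ and the theorem holds with equality. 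The only cycle of length $\ge 4$ is that $4$-cycle, and removing it leaves two disjoint unit triangles, so $d=2$ and $2d=4>k-2=2$; removing a $3$-cycle instead gives $d\ge 1>\tfrac{k-2}{2}=\tfrac12$. Hence no choice of cycle makes your chain $4\cyc(P)\le \perim(P)-k+2d+4$ close to $\perim(P)+2$, and your induction step fails on an equality case of the theorem. The defect in the accounting is that you apply the inductive bound $4\cyc(Q_i)\le\perim(Q_i)+2$ uniformly to every component of $P-c$; in the example each component is a unit triangle, for which that bound has slack $1$, and that slack is exactly what pays for the ``extra'' components. The paper formalizes this: it introduces \emph{triangular pre-components}, proves $2k-t\le m-2$ where $t$ counts them (\Cref{lem_k+2}, \Cref{prop_bound_on_comps}), and uses the sharper estimate $\cyc(Q_i)\le\tfrac14(\perim(Q_i)+1)$ for components coming from triangular pre-components. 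Any repair of your argument must incorporate some version of this refinement.

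Two further points. Your Case (ii) claim (an indecomposable polygon all of whose cycles are $3$-cycles has at most two of them, one of each orientation) is true and is the paper's \Cref{lem_triangles}, but your orientation argument is only asserted; the paper proves it by removing all but two same-oriented $3$-cycles and running a finite check on perimeter-$6$ boundary loops. For the equality characterization, your arithmetic $\sum_c(4-k_c)=2$ and the ``$\Leftarrow$'' direction are fine and match the paper, but your ``$\Rightarrow$'' direction inherits the broken inequality: the paper instead gets strictness for primitive $P$ and $m>4$ directly from the equality clause of \Cref{prop_bound_on_comps} (equality in $2k-t\le m-2$ forces $m\le 4$ when $P$ is primitive), which again routes through the triangular-pre-component machinery you are missing.
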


It suffices to prove the above for indecomposable generalized grid polygons. The proof idea proceeds as follows: we remove an $m$-cycle, where $m \geq 4$, and apply the induction hypothesis to the resulting generalized grid polygon (given by \Cref{thm_dropping_cycle}) by inductively examining an outermost component. We would like to control the number of components of the new generalized grid polygon. Thus, we want to understand how removing a cycle can create more components. 

\begin{lemma}\label{lem_cutting_path}
    Let $P$ be an indecomposable generalized grid polygon with boundary $(b_1, \ldots, b_n)$, and let $c_1, \ldots, c_a$ be some cycles in its billiards permutation. Let $Q = P - c_1 - \cdots - c_a$ be the generalized grid polygon obtained by removing $c_1, \ldots, c_a$ from $P$ (as given by \Cref{thm_dropping_cycle}) and suppose that $Q$ is decomposable. Let $Q_1$ be an outermost component of $Q$. Suppose that $Q_1$ has boundary $(b'_1, \ldots, b'_\ell)$, where $b'_1$ corresponds to $b_1$ and $b'_{\ell}$ corresponds to $b_k$. Then there exists an interior path from the tail of $b_1$ to the head of $b_k$ in $P$ such that its image under $f$ is a shortest grid path and each pane in the path is crossed only by beams in the cycles $c_1, \ldots, c_a$ (see \Cref{fig_components}). 

    \begin{figure}[ht]
        \centering
        \includegraphics[width=0.5 \textwidth]{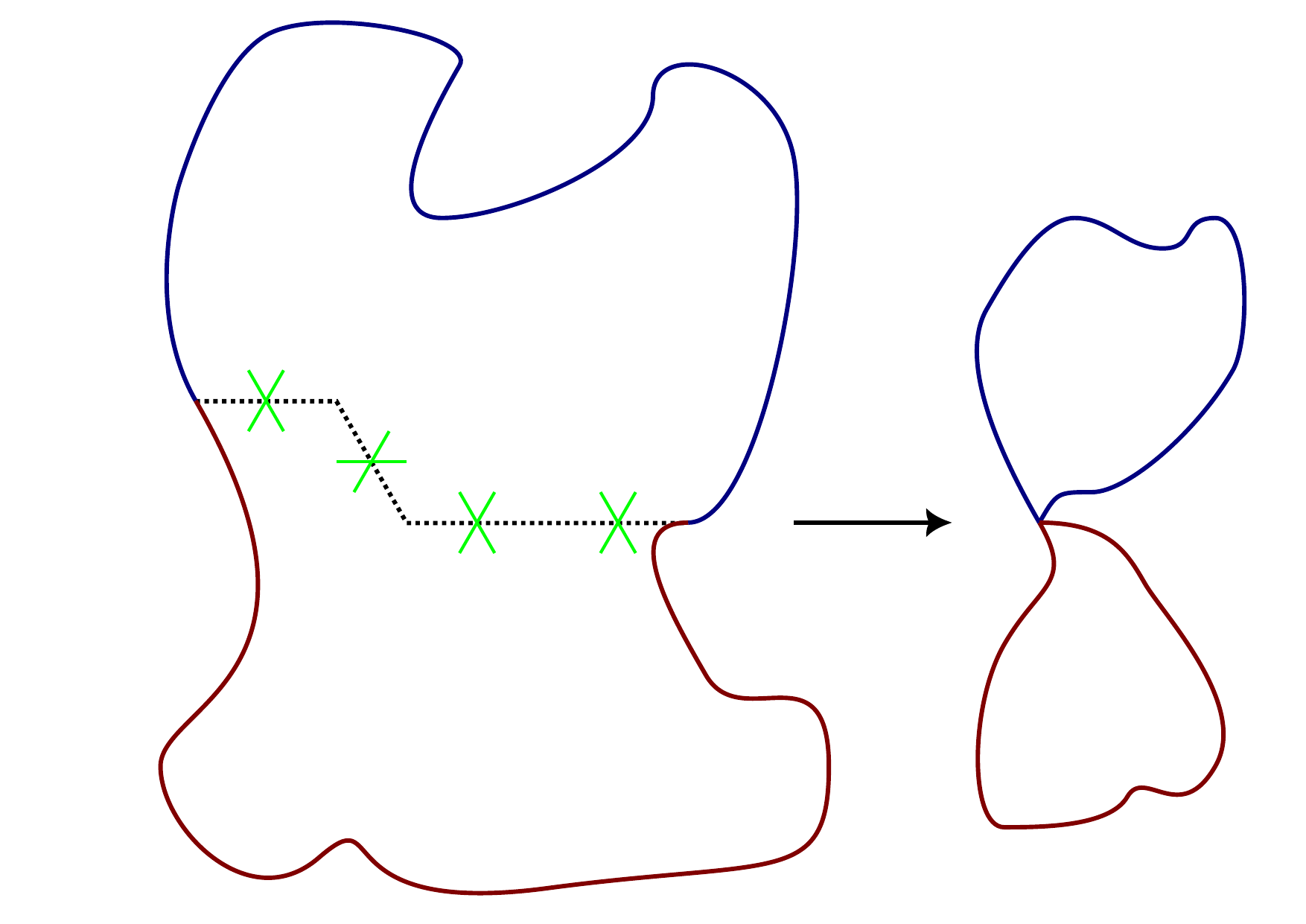}
        \caption{Illustration of an interior shortest path dividing a polygon into two parts which become separate components after removing the green cycle.}
        \label{fig_components}
    \end{figure}
\end{lemma}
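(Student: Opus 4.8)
The plan is to follow the cycle-dropping procedure of \Cref{thm_dropping_cycle} at the level of $2$-simplices and to extract $\gamma$ as the interface, inside $P$, between the part surviving into $Q_1$ and everything else. Iterating the construction in the proof of \Cref{thm_dropping_cycle} over $c_1,\dots,c_a$, the polygon $Q$ is obtained from $P$ by deleting from each horizontal strip the $2$-simplices crossed by a beam of some $c_i$ going in the $60^\circ$ or $180^\circ$ direction, contracting the horizontal strips that thereby become degenerate, and reconnecting their neighbors. Let $D$ be the set of $2$-simplices of $P$ deleted in this process, and let $E_1$ (resp.\ $E_2$) be the set of surviving $2$-simplices that belong to $Q_1$ (resp.\ to $Q\setminus Q_1$), so that the $2$-simplices of $P$ are partitioned as $E_1\sqcup E_2\sqcup D$ and the boundary panes of $Q_1$ are exactly the panes among $b_1,\dots,b_k$ lying in no $c_i$.

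Next I would produce $\gamma$. Since $Q_1$ is a disk whose boundary loop $(b'_1,\dots,b'_\ell)$ consists precisely of the surviving panes among $b_1,\dots,b_k$, and since the contractions do not change homeomorphism type, the union of the $2$-simplices of $E_1$ is a disk inside $P$ whose boundary loop of panes is $(b'_1,\dots,b'_\ell)$ followed by a path $\gamma$ of interior panes running from the head of $b_k$ back to the tail of $b_1$. Each pane of $\gamma$ has a $2$-simplex of $E_1$ on one side; on its other side it cannot have a $2$-simplex of $E_2$, for then it would be a $1$-simplex shared by the two distinct components $Q_1$ and $Q\setminus Q_1$, which is impossible. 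So every pane of $\gamma$ borders a $2$-simplex of $D$; that is, $\gamma$ traces a boundary component of the deleted region. The portion of $D$ meeting $Q_1$ is carved out by straight segments — the $60^\circ$ beams of the $c_i$ and the (contracted) horizontal strips whose beam lies in some $c_i$ — and the panes that bound such a diagonal or horizontal swath of $D$ on the side away from the beam form a straight $60^\circ$ run, respectively a straight horizontal run, of panes; concatenating these along $\gamma$, one checks that the image of $\gamma$ under $f$ is monotone in a single direction and hence a shortest grid path. Its interior vertices are interior to $P$ (each lies on a $1$-simplex between two surviving $2$-simplices, or between a deleted swath and a surviving $2$-simplex), and its endpoints must be the tail of $b_1$ and the head of $b_k$, since these are exactly the two points at which $\partial Q_1$ closes into a loop in $Q$.

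The main obstacle is to upgrade ``every pane of $\gamma$ borders a deleted $2$-simplex'' to ``every pane of $\gamma$ is crossed only by beams of $c_1,\dots,c_a$''. Suppose a pane $e$ of $\gamma$ is crossed by a beam $B$ belonging to no $c_i$; write $B$ also for the cycle of $\pi$ containing it. Since $B$ is none of $c_1,\dots,c_a$, all of its boundary panes survive, so by \Cref{thm_dropping_cycle} the whole cycle $B$ persists as a cycle of $\pi_Q$ and lies in a single component of $Q$; as $B$ passes through the $E_1$-simplex bordering $e$, that component is $Q_1$. But the $2$-simplices of the deleted swath immediately adjacent to $e$ are gone, so in $Q$ the beam $B$ can reach $e$ only from the surviving strips on the two sides of $\gamma$; following $B$ across $e$ then forces it into the $E_2$-side region of $P$, hence through some $2$-simplex of $E_2$, and since the portions of $B$ between consecutive boundary-pane hits are carried unchanged from $P$ to $Q$ (as in the last paragraph of the proof of \Cref{thm_dropping_cycle}), this places part of $B$ in $Q\setminus Q_1$, contradicting that $B$ lies in $Q_1$. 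Making all of this rigorous — verifying that $e$ survives in $Q$, identifying exactly which strips flank $e$, and controlling the degenerate-strip contractions and reconnections so that the final step is valid — is the delicate heart of the argument, and is where I expect most of the work to go.
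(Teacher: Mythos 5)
Your approach---extracting the path as the interface in $P$ between the $2$-simplices surviving into $Q_1$ and everything else---is genuinely different from the paper's, but it has gaps that go beyond ``delicate bookkeeping.'' The first is the claim that the union of $E_1$ is a disk in $P$ whose boundary is $(b'_1,\ldots,b'_\ell)$ followed by a single interior path $\gamma$. The procedure of \Cref{thm_dropping_cycle} does not simply delete $D$ from $P$: it deletes the swaths crossed by the $60^{\circ}$ and $180^{\circ}$ beams of the $c_i$ and then \emph{glues the two sides of each swath together} (each horizontal strip is shortened and reassembled, and degenerate strips are collapsed to paths or points). Consequently $E_1$, viewed as a subcomplex of $P$, is typically disconnected---already within a single horizontal strip the surviving triangles form several runs separated by deleted columns, and these runs only become adjacent in $Q$. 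Gluing disconnected pieces into a disk is not a homeomorphism, so ``the contractions do not change homeomorphism type'' does not give you that $\bigcup E_1$ is a disk in $P$, and $\gamma$ as you define it need not be a single path from the head of $b_k$ to the tail of $b_1$.

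Second, even for a correctly identified interface, the assertion that concatenating horizontal and $60^{\circ}$ runs yields a path ``monotone in a single direction and hence a shortest grid path'' is precisely the hard point, and it fails for the raw interface: a deleted swath can protrude into the region bordering $Q_1$ and force the interface to double back around it. The paper's proof works for this: it first produces \emph{some} interior path from the tail of $b_1$ to the head of $b_k$ avoiding every beam outside $c_1,\ldots,c_a$ (using that no other cycle has boundary panes on both sides of $Q_1$ and that cycles confined to opposite sides have disjoint beams), and then explicitly straightens it by trapezoid replacements, invoking the hypothesis that $Q_1$ is \emph{outermost} to rule out the straightened path hitting the boundary. Your argument never uses outermostness for this purpose, which is a symptom of the missing step. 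Finally, the contradiction you sketch at the end---that a beam $B$ outside $c_1,\ldots,c_a$ crossing $\gamma$ at $e$ must pass into $E_2$---does not follow: crossing $e$ takes $B$ into a deleted swath, from which it can re-emerge into another piece of $E_1$ on the far side (the two sides of a swath are identified in $Q$, so $B$ stays inside $Q_1$). You flag this step as incomplete yourself; the paper's route argues with boundary panes rather than local crossings of $\gamma$ and thereby avoids the issue entirely.
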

\begin{proof}
    Let $x$ be the tail of $b_1$ and $y$ the head of $b_k$. We claim that no beam from a cycle with all its panes in $\{b_1, \ldots, b_k\}$ will intersect a beam from a cycle with all its panes in $\{b_{k + 1}, \ldots, b_n\}$. This is because they will still intersect after $c_1, \ldots, c_a$ are removed and violate the assumption that $Q_1$ is a component of $Q$. Also, no cycle other than $c_1, \ldots, c_a$ can contain both a pane from $\{b_1, \ldots, b_k\}$ and one from $\{b_{k + 1}, \ldots, b_n\}$. Thus, starting from $x$, it is possible to find an interior path (of $P$) to $y$ that does not intersect any beam other than those of $c_1, \ldots, c_a$. 

    \begin{figure}[ht]
        \centering
        \includegraphics[width=0.65 \textwidth]{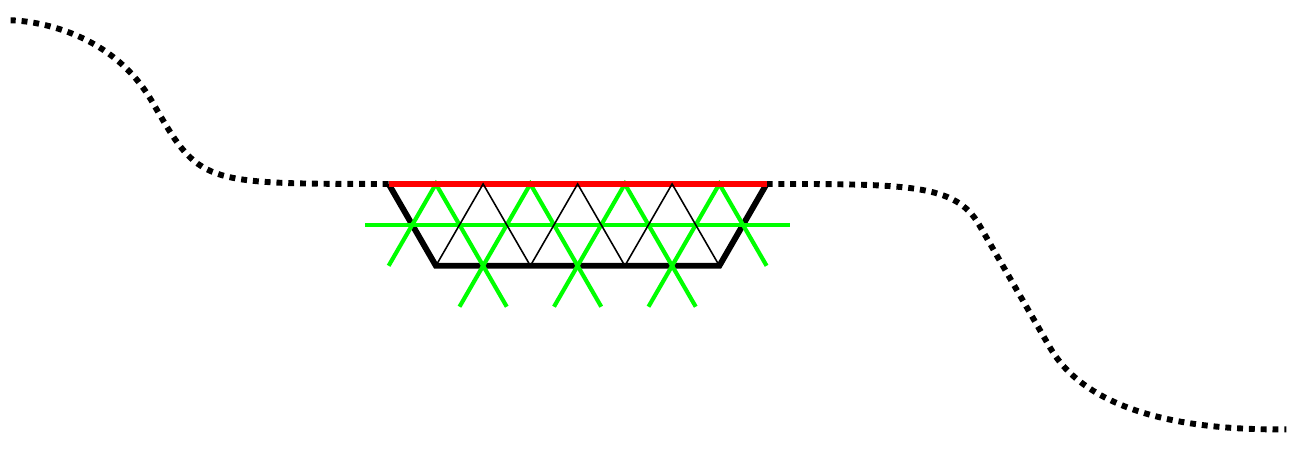}
        \caption{Whenever there is a subpath in the path that looks like the black segments above, we can replace it with a shorter straight segment like the red segment above
        and still have a path that only intersects the cycles $c_1, \ldots, c_a$.}
        \label{fig_shorten_path}
    \end{figure}

    If this path is not a shortest path, we can shorten it by replacing a subpath of the form $(u, v_1, \ldots, v_\ell, w)$, where they form three sides of a trapezoid and $v_1, \ldots, v_{\ell}$ all go in the same direction, by the fourth side, which is strictly shorter (see \Cref{fig_shorten_path}). The new path still does not intersect any beam other than those of $c_1, \ldots, c_a$. If the new path no longer stays in the interior and contains some boundary pane $b$, then the path divides the polygon into at least three components, each of which becomes a component of $Q$ when panes incident to $c_1, \ldots, c_a$ are removed. This violates the assumption that $Q_1$ is an outermost component. Thus, we can eventually arrive at a shortest interior path, as desired. 
\end{proof}
Using \Cref{lem_cutting_path}, we can inductively cut an indecomposable generalized grid polygon $P$ into pieces which will turn into the components of $Q$ when $c_1, \ldots, c_a$ are removed (see \Cref{fig_pre_comps}), by cutting out the piece corresponding to an outermost component of the remaining part of $Q$ at each step. Let $P_1, \ldots, P_k$ be these pieces, in the order they are obtained. We call these a sequence of \defterm{pre-components} of $Q$. Observe that depending on the way we cut $P$, this sequence of pre-components may not be unique. On the other hand, $P_1$ always corresponds to an outermost component of $Q$. Furthermore, for each $i$, $P_{i + 1}, \ldots, P_k$ is a sequence of pre-components of the generalized grid polygon consisting of the components of $Q$ corresponding to $P_{i + 1}, \ldots, P_k$. In future discussions, we often fix one such sequence and refer to them as ``the'' pre-components of $Q$.

\begin{figure}[ht]
\centering
\begin{tikzpicture}
	\node (image) at (0,0) {
		\includegraphics[width=0.4\textwidth]{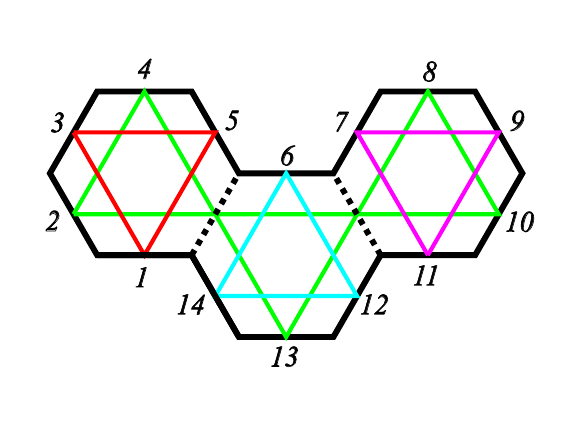}
	};
    \node[text width=3cm] at (-0.35,0.4) 
    {$P_1$};
    \node[text width=3cm] at (1.3,-0.55) 
    {$P_2$};
    \node[text width=3cm] at (2.9,0.4) 
    {$P_3$};
\end{tikzpicture}
\caption{This is one way pre-components can be assigned for the generalized grid polygon resulted from removing the green cycle.}
\label{fig_pre_comps}
\end{figure}

It follows from \Cref{lem_cutting_path} that if a pane is on the boundary of a pre-component but not on the boundary of $P$, it is crossed only by the cycles $c_1, \ldots, c_a$. Note that for a pre-component $P_i$, when viewed as a generalized grid polygon, the beams of the cycle $c_j$ inside $P_i$ may break up into several cycles. This is why we proved \Cref{lem_cutting_path} for removing several cycles. 

When we prove \Cref{thm_perim_strong}, we will induct on $\perim(P)$: we drop an $m$-cycle for some $m \geq 4$ and apply induction hypothesis inside these pre-components. To do that, we need to understand what cycles look like inside a pre-component.
\begin{lemma}\label{lem_k+2}
    Let $P$ be an indecomposable generalized grid polygon and $Q = P - c_1 - \cdots - c_a$ the generalized grid polygon obtained by removing $c_1, \ldots, c_a$ from $P$. Let $P_1, \ldots, P_k$ be a sequence of pre-components of $Q$. Suppose that the path on the boundary of $P_1$ but not on the boundary of $P$ is a shortest grid path of length $k \geq 2$ (by \Cref{lem_cutting_path}). Then the number of boundary panes of $P$ inside $P_1$ hit by $c_1, \ldots, c_a$ is at least $k + 2$.
\end{lemma}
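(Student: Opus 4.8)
The plan is to work entirely inside $P_1$. Write $g_1, \dots, g_k$ for the panes of $\gamma$ in order, with consecutive vertices $v_0 = x, v_1, \dots, v_k = y$, and $b_1, \dots, b_N$ for the boundary panes of $P$ lying in $P_1$, listed so that $x$ is the tail of $b_1$ and $y$ is the head of $b_N$. I would first record the reductions built into the earlier sections: each $g_i$ is an interior pane of $P$, hence (by \Cref{def_polygon}(4) and the local description of the billiards) is crossed by exactly two beams of the billiards of $P$, both of which belong to $c_1, \dots, c_a$ by \Cref{lem_cutting_path}; dually, in the billiards of $P_1$ each $g_i$ is a boundary pane hit by exactly two beams of one and the same cycle, and calling a cycle of $P_1$ a $\gamma$\emph{-cycle} if it hits some $g_i$, the $\gamma$-cycles are exactly the cycles of $P_1$ assembled from the fragments into which $c_1, \dots, c_a$ break when one restricts from $P$ to $P_1$. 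Consequently the number we must bound from below is precisely the number $s$ of $b_j$'s hit by $\gamma$-cycles, and it suffices to prove $s \ge k + 2$.

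The engine of the argument is a charging map $\Phi$ from $\{g_1, \dots, g_k\}$ to $\{b_1,\dots,b_N\}$. Traverse the $\gamma$-cycle through $g_i$; I claim that before it returns to $\gamma$ it must bounce off some $b_j$, and I set $\Phi(g_i)$ to be the first such $b_j$. This is the one place the shortest-path hypothesis enters: a beam leaving $g_i$ and reaching another $\gamma$-pane with no $b_j$-bounce in between would, together with the sub-path of $\gamma$ joining the two $\gamma$-panes, cut out a sub-region of $P_1$ realizing exactly a ``three sides of a trapezoid'' configuration along $\gamma$, which the proof of \Cref{lem_cutting_path} has already excluded. This gives $s \ge |\Phi(\{g_1, \dots, g_k\})|$. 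To produce the extra $+2$, I would isolate the two panes $b_1$ and $b_N$ meeting $\gamma$ at its endpoints: because $v_1, \dots, v_{k-1}$ are interior to $P$ while $x \in \partial P$, the geometry of $P_1$ near $x$ is a fan of triangles one of whose extreme edges is $g_1$ and the other $b_1$, and following the $\gamma$-cycle of $g_1$ through this fan forces it to meet $b_1$; symmetrically $b_N$ is hit, and these are the two ``free'' panes to be adjoined to the image of $\Phi$.

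The step I expect to be the main obstacle is assembling these pieces into $s \ge k + 2$: the map $\Phi$ need not be injective, so injectivity alone would only yield $s \ge k$, and the thin-trapezoid example — a generalized grid polygon whose unique billiards cycle sweeps its entire boundary, with $\gamma$ taken to be one long straight side — shows that $s \ge k+2$ is outright false without some hypothesis beyond ``$\gamma$ is a shortest boundary path of $P_1$ hit by every cycle''. The missing hypothesis is that $P_1$ is a pre-component of $Q$: equivalently, that $Q_1 := P_1$ minus all its $\gamma$-cycles (via \Cref{thm_dropping_cycle}) is a genuine outermost component of $Q$, in particular a nonempty indecomposable generalized grid polygon, with $\perim(Q_1) = N - s \ge 3$. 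I would use this to control the fibers of $\Phi$ and their overlaps with $\{b_1, b_N\}$, showing that any collapse in the count $k+2 \to s$ forces the $\gamma$-cycles to consume all but at most two of the $b_j$'s in a way that pinches $Q_1$ below perimeter $3$. I suspect the cleanest route is an induction: peel off a single $\gamma$-cycle with \Cref{thm_dropping_cycle}, observe that $\gamma$ breaks into shorter shortest paths bounding the new pre-components while the $b_j$'s hit by the dropped cycle are bookkept, and apply the inductive hypothesis to each piece; verifying that the two ``free'' panes add up correctly across the pieces is the delicate point.
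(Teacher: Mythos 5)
Your preliminary reductions are mostly sound: it is true that a beam segment leaving a pane of $\gamma$ must bounce off some boundary pane $b_j$ of $P$ before meeting $\gamma$ again, although the correct reason is simply that a beam segment is parallel to a grid direction and therefore passes through the midpoint of at most one pane of a shortest (monotone, two-direction) grid path --- it is not the trapezoid-shortening step in the proof of \Cref{lem_cutting_path}, which is about modifying the path, not about beams. The real problem is that the proposal stops exactly where the lemma begins. The step you yourself label ``the main obstacle'' --- passing from the charging map $\Phi$ to the bound $s \ge k+2$ --- is the entire content of the statement, and you offer only intentions (``I would use this to control the fibers of $\Phi$'', ``I suspect the cleanest route is an induction'') rather than an argument. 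Worse, the one concrete ingredient you do commit to for the ``$+2$'' is false: the cycle through $g_1$ is \emph{not} forced by the fan of triangles at $x$ to hit the endpoint pane $b_1$. The beam from the midpoint of $g_1$ heading toward $x$ crosses the pane of the fan incident to $x$ and then immediately exits the fan through a pane not incident to $x$; it can then travel arbitrarily far and bounce first off a boundary pane having nothing to do with $b_1$ (and symmetrically for $b_N$). So the two ``free'' panes you want to adjoin to the image of $\Phi$ are not available by a local argument.

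For comparison, the paper's proof is a direct case analysis on the shape of the shortest path, and the $+2$ comes from entirely different places in the two cases. If $\gamma$ uses two directions (say $180^{\circ}$ and $240^{\circ}$ panes), the $k$ northwest beams from the path panes, together with the west beam from the first $240^{\circ}$ pane and the northeast beam from the last $180^{\circ}$ pane, already hit $k+2$ pairwise distinct boundary panes; distinctness uses \Cref{lem_cutting_path} (a pane adjacent to $\gamma$ cannot be one of the hit panes). If $\gamma$ is straight and horizontal, the northeast beams give $k$ distinct panes $u_1,\dots,u_k$ and the northwest beam from $p_1$ gives one more, $v_1$; assuming no $(k+2)$-nd pane forces $v_{i+1}=u_i$ for all $i$, so the trajectory closes up into a circuit that either sweeps a horizontal strip --- impossible, since then $P_1$ would produce an empty component of $Q$, contradicting that $P_1$ is a pre-component of an actual outermost component --- or encircles boundary panes, contradicting simple connectivity. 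This global closing-up argument is precisely how the pre-component hypothesis eliminates your thin-trapezoid example; your proposal correctly senses that this hypothesis must be invoked there, but never carries out the argument, so the lemma remains unproved.
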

\begin{proof}
    Suppose that the path on the boundary of $P_1$ but not on the boundary of $P$ has panes $p_1, \ldots, p_k$ going from left to right. By \Cref{lem_cutting_path}, each $p_i$ is crossed by two beams of $c_1, \ldots, c_a$. 

    \begin{figure}[ht]
        \centering
        \includegraphics[width=0.4 \textwidth]{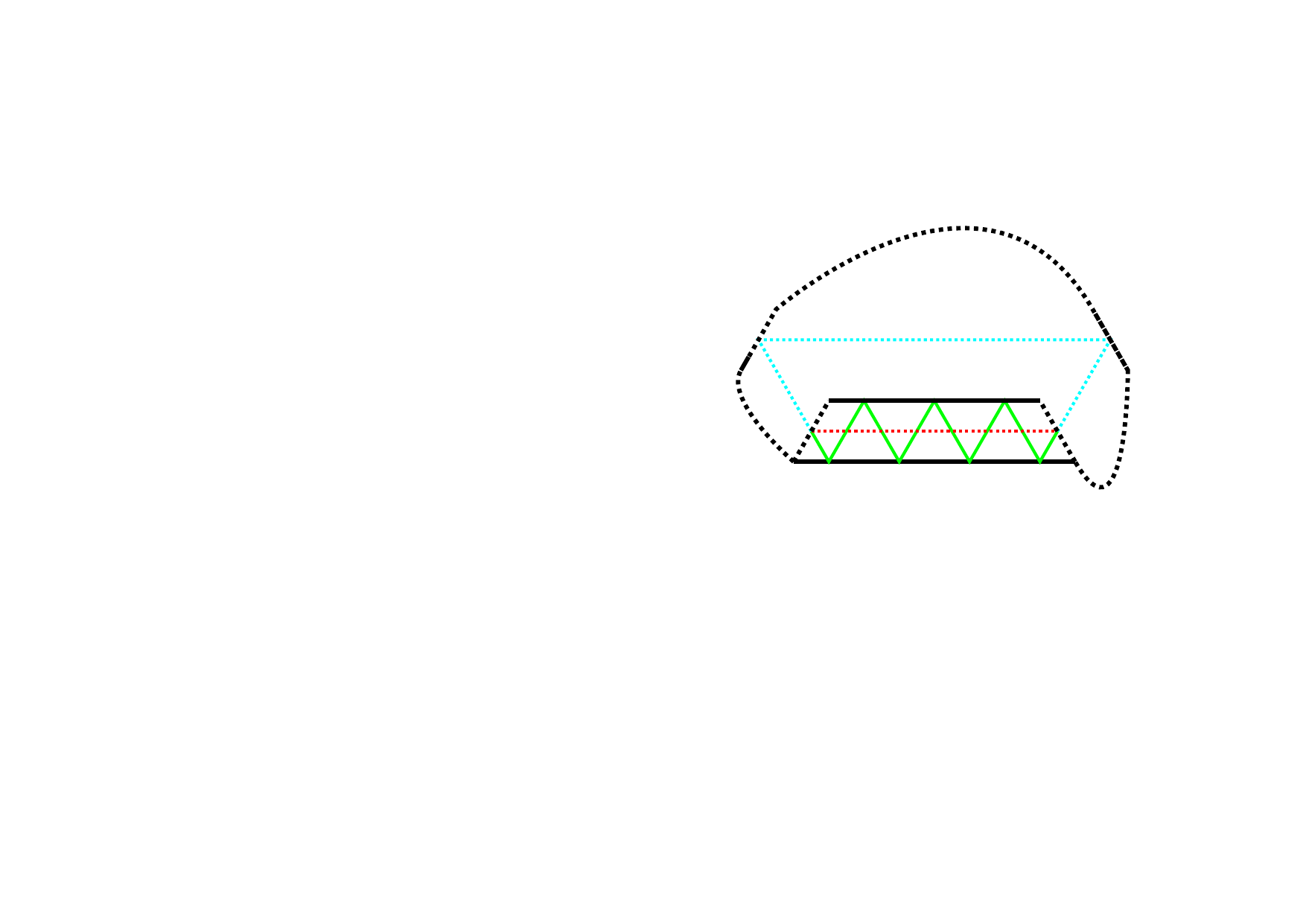}
        \caption{It is impossible to only have $k + 1$ boundary panes hit by $c_1, \ldots, c_a$. Either we end up with a horizontal strip (red), or our cycle encloses some boundary panes (blue).}
        \label{fig_k+2_straight}
    \end{figure}
    
    First suppose that this path is straight and horizontal. Let $u_1, \ldots, u_k$ be the boundary panes of $P_1$ hit by the northeast beams from $p_1, \ldots, p_k$, respectively. Similarly, let $v_1, \ldots, v_k$ be the boundary panes of $P_1$ hit by the northwest beams from $p_1, \ldots, p_k$, respectively. Clearly, $u_1, \ldots, u_k$ are all distinct and $v_1, \ldots, v_k$ are all distinct. Furthermore, $v_1$ must be distinct from each $u_i$. Suppose for the sake of contradiction that these $k + 1$ panes, $v_1, u_1, \ldots, u_k$, are the only boundary panes in $P_1$ hit by $c_1, \ldots, c_a$. Then we must have 
    \[
        v_2 = u_1, \, v_3 = u_2, \, \ldots, \, v_k = u_{k - 1}.
    \]
    Hence, the beam from $p_k$ to $u_k$ must bounce to $v_1$ and bounce back to $p_1$. However, this either results in $P_1$ being a horizontal strip, in which case it will become an empty component of $Q$, or the cycle will encircle some boundary panes, violating the simply-connectedness of $P$ (see \Cref{fig_k+2_straight}). Therefore, there must be at least $k + 2$ boundary panes hit by $c_1, \ldots, c_a$. 

    \begin{figure}[ht]
        \centering
        \includegraphics[width=0.35 \textwidth]{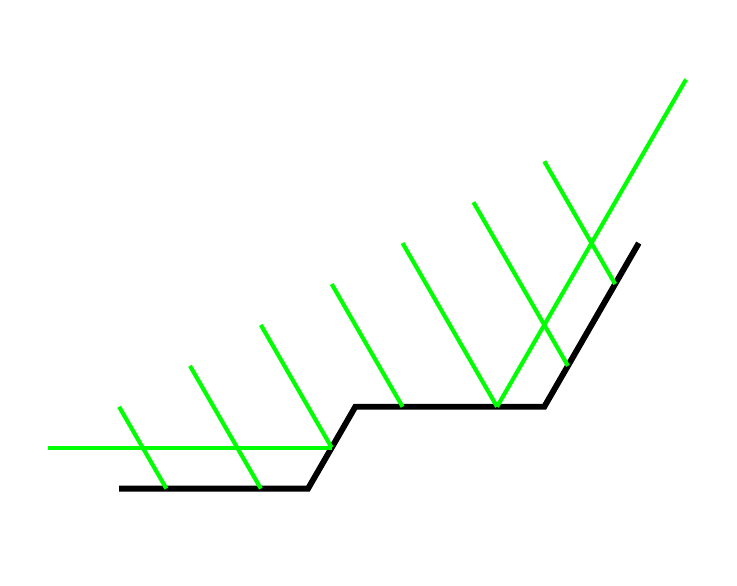}
        \caption{There must be at least $k + 2$ distinct panes on the boundary hit by $c_1, \ldots, c_a$.}
        \label{fig_k+2_bent}
    \end{figure}

    Now suppose that the path consists of panes going in the $180^{\circ}$ and $240^{\circ}$ direction. Then each pane has a beam going in the northwest direction. Let the boundary panes hit by these beams be $q_1, \ldots, q_k$. Let $w_1$ be the boundary pane hit by the beam going west from the first pane in the path that is in the $240^{\circ}$ direction. Let $w_2$ be the boundary pane hit by the beam going northeast from the last pane in the path in the $180^{\circ}$ direction. We claim that $q_1, \ldots, q_k, w_1, w_2$ are all distinct (see \Cref{fig_k+2_bent}). First, $q_1, \ldots, q_k$ are all distinct and $w_1 \neq w_2$. The only possibility for $w_1$ to be in $q_1, \ldots q_k$ is if $w_1 = q_1$. However, this would mean that $q_1$ is a pane adjacent to the path and hit by $c_1, \ldots, c_a$. This is impossible by \Cref{lem_cutting_path}. Thus, $w_1 \notin \{q_1, \ldots, q_k\}$. Similarly, $w_2 \notin \{q_1, \ldots, q_k\}$. 
\end{proof}

Now, we start by proving \Cref{thm_perim_strong} for the simplest polygons: those that only contain $3$-cycles in their billiards permutation. 
\begin{lemma}\label{lem_triangles}
    Let $P$ be an indecomposable generalized grid polygon. If the billiards permutation of $P$ contains only $3$-cycles, then $P$ is either a unit triangle or a unit hexagon consisting of six unit triangles (see \Cref{fig_only_three_cycles}). 

    \begin{figure}[ht]
        \centering
        \includegraphics[width=0.3 \textwidth]{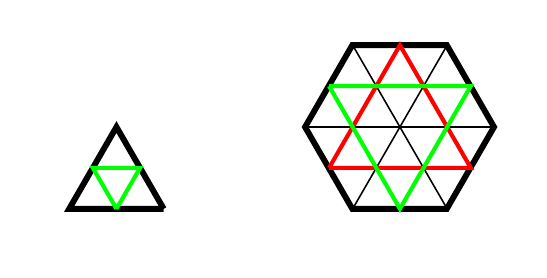}
        \caption{Any indecomposable generalized grid polygon containing only $3$-cycles maps to a rotation of one of these two in the triangular grid.}
        \label{fig_only_three_cycles}
    \end{figure}
\end{lemma}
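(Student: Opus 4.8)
The plan is to prove the stronger statement by induction on $k=\cyc(P)$, using that $\perim(P)=3k$ when all cycles are $3$-cycles. I would first record a few facts about a single $3$-cycle $c$ of an arbitrary generalized grid polygon. Every beam travels in one of the directions $60^{\circ},180^{\circ},300^{\circ}$, and a reflection off a pane permutes this three-element set without ever returning a beam to its incoming direction; hence consecutive legs of a cycle point in distinct directions, so a $3$-cycle uses each of the three directions exactly once and its trajectory is a nondegenerate equilateral triangle whose three vertices are midpoints of the three boundary panes it bounces off. Moreover the west-going leg of $c$ passes slant-to-slant through every triangle of a maximal horizontal row, so it sweeps out an entire horizontal strip; applying \Cref{lem_tree_of_strips} to the grid rotated by $120^{\circ}$, the $60^{\circ}$-leg likewise sweeps out an entire ``$60^{\circ}$-row.'' These two rows contain the same number of triangles, the \emph{order} $m$ of $c$, and tracing the reflections at the two ends of the west-going leg forces the outer slant panes there to have different types, which makes $m$ odd. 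Finally, in a polygon whose cycles are all $3$-cycles each cycle has exactly one west-going leg, so the horizontal strips are in bijection with the cycles, and since the strips partition the $2$-simplices, $P$ has $\sum_i m_i$ triangles, $m_i$ the order of the $i$-th cycle.

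For the base case $k=1$, $P$ is a single horizontal strip, and such a strip with $\perim(P)=3$ is a unit triangle. For $k\ge 2$, pick a $3$-cycle $c$ of order $m$; then $m\ge 3$, since an order-$1$ cycle forces $P$ to be a unit triangle. Let $Q=P-c$ be the polygon of \Cref{thm_dropping_cycle}. The crucial step is that $Q$ is again indecomposable. If it were not, \Cref{lem_cutting_path} would produce an interior shortest grid path — of positive length, as a length-$0$ path would exhibit $P$ itself as a wedge — each of whose panes is crossed only by beams of $c$. But every interior pane is crossed by exactly two beams, and these travel in the two directions (of the three) to which the pane is not parallel, so they are two different legs; if both were legs of $c$, two sides of the triangular trajectory of $c$ would meet at an interior pane-midpoint, contradicting that the sides of a triangle meet only at its vertices, which here are midpoints of boundary panes. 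So $Q$ is indecomposable, has only $3$-cycles, and has $k-1$ cycles, and the induction hypothesis forces $k-1\in\{1,2\}$, i.e.\ $k\le 3$.

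To rule out $k=3$: by the previous step, dropping any of the three cycles $c_i$ yields an indecomposable polygon with two $3$-cycles, necessarily the unit hexagon, with $6$ triangles. Dropping $c_i$ deletes exactly the triangles crossed by its $60^{\circ}$- and $180^{\circ}$-legs, namely the $m_i$ triangles of its horizontal strip and the $m_i$ triangles of its $60^{\circ}$-row, and these consecutive sides of the trajectory triangle overlap in exactly the one corner triangle at their common vertex; hence $P$ has $6+2m_i-1$ triangles for every $i$, so $m_1=m_2=m_3=:m$, and comparing with $m_1+m_2+m_3=3m$ gives $3m=2m+5$, so $m=5$ and $P$ has $15$ triangles. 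Then the tree of horizontal strips has three vertices — a path $H_1-H_2-H_3$ — each strip has $5$ triangles and so horizontal sides of lengths $2$ and $3$; a short Euler-characteristic computation (using that the interior vertices of $P$ are precisely the interior vertices of the paths labeling the edges of this tree) shows the two edge labels have total length $\frac{1}{2}(15-3)=6$, yet they are disjoint horizontal subpaths of the middle strip $H_2$, which has only $5$ horizontal panes — a contradiction. Therefore $k=2$.

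When $k=2$, dropping either cycle leaves a single triangle, so $P$ has $1+(2m_i-1)=2m_i$ triangles; thus $m_1=m_2=:m$ and $P$ has $2m$ triangles. The strip tree now has one edge, whose label has length $\frac{1}{2}(2m-2)=m-1$; as a horizontal subpath of a strip with $m$ triangles this is at most that strip's longer side $\lceil m/2\rceil$, so $m-1\le\lceil m/2\rceil$, and since $m$ is odd and $\ge 3$ we get $m=3$. Hence $P$ has $6$ triangles, so by Euler's formula exactly one interior vertex, around which all six triangles lie: $P$ is the unit hexagon. I expect the real difficulties to be the ``$Q$ is indecomposable'' step and the verification that dropping a $3$-cycle deletes precisely $2m-1$ triangles (equivalently, that a single leg sweeps out a full row and that the two swept rows meet only in one corner); once these are established, the induction itself, the triangle counts, and the two Euler-characteristic arguments should be routine.
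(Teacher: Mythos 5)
Your proof is correct in substance, but its core is genuinely different from the paper's. You share the first key step: dropping a $3$-cycle from an indecomposable polygon leaves it indecomposable (the paper simply asserts that a $3$-cycle never crosses an interior pane twice and invokes \Cref{lem_cutting_path}; you justify the same fact via the two-beams-per-interior-pane observation and the fact that two sides of the triangular trajectory meet only at its vertices). After that the routes diverge. The paper argues by pigeonhole on the orientations of the $3$-cycles: if there were more than two, it strips away all but two same-orientation ones using \Cref{thm_dropping_cycle}, reducing to a perimeter-$6$ indecomposable polygon, and disposes of the $15$ possible boundary loops by an unwritten finite check, leaving the final identification of the triangle/hexagon implicit. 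You instead induct on the number of cycles and argue quantitatively: each $3$-cycle sweeps a horizontal strip of odd size $m$, dropping it deletes exactly $2m-1$ triangles, the strips biject with the cycles and partition the area, and then Euler-characteristic/edge-label counts in the tree of horizontal strips (\Cref{lem_tree_of_strips}) rule out three or more cycles and force the two-cycle case to be the unit hexagon. Your approach buys a case-check-free, self-contained argument that actually derives the triangle/hexagon classification rather than leaving it implicit, reusing machinery (strip trees, \Cref{thm_dropping_cycle}) already developed in the paper; its cost is that the two counting inputs you flag--the $2m-1$ deletion count (overlap of the $60^{\circ}$ and $180^{\circ}$ legs in exactly one corner $2$-simplex) and the odd-parity claim--do need to be written out, and in the indecomposability step your ``sides of a triangle meet only at vertices'' argument is phrased in the image, so for self-overlapping polygons you should pull the intersection point back along a leg (each leg maps homeomorphically onto its image segment) to conclude the crossing point would have to be a boundary bounce point; this is a minor tightening, at the same level of rigor the paper itself assumes.
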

\begin{proof}
    Since a $3$-cycle does not cross any interior pane twice, by \Cref{lem_cutting_path}, removing a $3$-cycle results in an indecomposable generalized grid polygon. Suppose for the sake of contradiction that $P$ contains more than two $3$-cycles. Then at least two of the $3$-cycles in $P$ have the same orientation. Applying \Cref{thm_dropping_cycle}, we can sequentially remove all the $3$-cycles but these two to get an indecomposable generalized grid polygon of perimeter $6$ with two $3$-cycles of the same orientation (suppose they are both downwards). Furthermore, the boundary must contain exactly two panes going in the $60^{\circ}$, $180^{\circ}$, and $300^{\circ}$ directions, respectively. There are 
    \[
        \binom{6}{2}\binom{4}{2}\binom{2}{2}/6 = 15
    \]
    possible boundary loops. It can be checked easily that none of them can admit a cycle structure with two $3$-cycles of the same orientation. 

    Therefore, any indecomposable generalized grid polygon containing only $3$-cycles contains at most two $3$-cycles, and if it contains exactly two, they must have opposite orientation. 
\end{proof}

We now bound the number of components that an indecomposable generalized grid polygon can have after removing several cycles. Suppose $Q$ is obtained by removing some cycles of total length $m$ (the cycles hit $m$ boundary panes in total) from the indecomposable generalized grid polygon $P$ and that $P_1, \ldots, P_k$ are the pre-components of $Q$. We call $P_i$ a \defterm{triangular pre-component} if the parts of the cycles to be removed inside $P_i$ form a single triangle. Recall that we say a generalized grid polygon is \defterm{primitive} if it can be disconnected by a single interior pane. 
\begin{proposition}\label{prop_bound_on_comps}
    Let $c_1, \ldots, c_a$ be cycles with total length $m$ in an indecomposable generalized grid polygon $P$ and let $Q = P - c_1 - \cdots - c_a$. Fix a sequence of pre-components of $Q$. Let $k$ be the number of pre-components and $t$ the number of triangular pre-components. Then $2k - t \leq m - 2$. Furthermore, if $P$ is primitive, then equality holds if and only if $m \leq 4$. 
\end{proposition}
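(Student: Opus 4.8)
The plan is to bound $m$ from below by summing contributions over the pre-components, paying for long cutting paths with \Cref{lem_k+2}. Fix a sequence of pre-components $P_1,\dots,P_k$ of $Q$; the $k-1$ cutting paths produced by \Cref{lem_cutting_path} are the edges of a tree $\Gamma$ on $\{P_1,\dots,P_k\}$, which I root at $P_k$, so that each non-root piece $P_v$ is cut off along a ``parent'' path $\gamma_v$ of length $\ell_v\ge 1$, with the earlier cuts incident to $P_v$ being exactly the edges joining $P_v$ to its children. For each $P_v$ let $m_v$ be the number of boundary panes of $P$ lying in $P_v$ that are hit by $c_1,\dots,c_a$, and let $L_v$ be the number of cutting panes bordering $P_v$; by \Cref{lem_cutting_path} every such cutting pane is crossed only by the removed cycles. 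The first step is to record three facts: $\sum_v m_v=m$ (each boundary pane of $P$ lies in one piece); $\sum_v L_v=2L$, where $L:=\sum_e\ell_e$, since each cutting pane borders exactly two pieces; and $m_v+L_v$ equals the total length of the removed trajectories inside $P_v$, because each hit boundary pane of $P$ in $P_v$ is hit once while each cutting pane bordering $P_v$, being crossed by exactly two removed beams of $P$ that match into a single reflection on the $P_v$-side, is bounced off exactly once. Hence $m_v+L_v\ge 3$, with equality iff $P_v$ is triangular; setting $a_v:=m_v+L_v-3\ge 0$ and summing yields $m=3k+\sum_v a_v-2L$.

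To finish the inequality it remains to prove $\sum_v a_v\ge 2E+(k-t)$, where $E:=\sum_{v\ne P_k}(\ell_v-1)=L-(k-1)$. The essential input is \Cref{lem_k+2}: applied at the step that cuts off $P_v$, to the indecomposable polygon $R$ formed by the pieces not yet removed, it shows that when $\ell_v\ge 2$ the number of boundary panes of $R$ in $P_v$ that are hit — which equals $m_v$ together with the lengths of the child-edges at $P_v$ — is at least $\ell_v+2$; hence $m_v+L_v\ge 2\ell_v+2$, i.e.\ $a_v\ge 2(\ell_v-1)+1$. Also, a triangular piece has $a_v=0$ and (again by \Cref{lem_k+2}) parent-length $1$, while a non-triangular piece has $a_v\ge1$. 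The non-triangular pieces split into those with $\ell_v\ge2$ (each with $a_v\ge 2(\ell_v-1)+1$) and those that are the root or have $\ell_v=1$ (each with $a_v\ge1$); since $\sum_{\ell_v\ge2}(\ell_v-1)=E$, summing over these two groups gives $\sum_v a_v\ge 2E+(k-t)$. Substituting into $m=3k+\sum_v a_v-2L$ with $L=(k-1)+E$ gives $m\ge 2k-t+2$, which is the claimed bound.

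Now suppose $P$ is primitive. A cutting path of length $1$ is a single interior pane joining two boundary points of $P$, so it disconnects $P$; hence every cutting path has length $\ge 2$. In particular no non-root piece is triangular, so $t\le1$ and $E\ge k-1$. If equality $2k-t=m-2$ holds, then every inequality above is tight: \Cref{lem_k+2} is attained at every non-root piece, $m_v+L_v=2\ell_v+2$ for every non-root $P_v$, and the root satisfies $m_{P_k}+L_{P_k}=4-t$, so (each child contributing at least $2$ to $L_{P_k}$) the root has at most two children. Assume for contradiction $k\ge2$ and pick a leaf $P_v\ne P_k$, so $L_v=\ell_v$ and $m_v=\ell_v+2$ with $\ell_v\ge2$. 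The main obstacle is to derive a contradiction here: one must determine precisely when \Cref{lem_k+2}'s bound $\ell_v+2$ can be attained for a cutting path of length $\ge2$, show that this pins the removed trajectories along $\gamma_v$ into a rigid configuration, and then — propagating this rigidity across $\Gamma$ via the remaining tightness conditions — exhibit an interior pane of $P$ that disconnects it (equivalently, force some cutting path to have length $1$), contradicting primitivity; the bent-path case of \Cref{lem_k+2} is handled by a symmetric analysis. Once $k\ge2$ is excluded, $k=1$, and then $m=2k-t+2=4-t\le4$.

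Conversely, suppose $P$ is primitive and $m\le4$. If $m=3$, the inequality already proved gives $2k-t\le1$, which forces $k=1$ and $t=1$; then $2k-t=1=m-2$. If $m=4$, the inequality gives $2k-t\le2$, so $k\le2$; and $k=2$ would force $t=2$, i.e.\ both pre-components triangular, so $m_1=m_2=3-\ell_1$ and $m_1+m_2=4$, forcing $\ell_1=1$, contrary to primitivity; hence $k=1$, and then $t=0$ (the single removed $4$-cycle does not make $P$ triangular), so $2k-t=2=m-2$. Thus for primitive $P$, equality holds if and only if $m\le4$.
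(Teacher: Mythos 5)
Your summation argument for the inequality $2k-t\le m-2$ is correct (it is essentially the paper's induction unrolled: the same two inputs, namely \Cref{lem_k+2} and the fact that the restricted trajectories inside each piece form cycles of total length $m_v+L_v\ge 3$, with equality exactly for triangular pieces), and your converse direction for primitive $P$ with $m\le 4$ is also fine. The genuine gap is the forward direction of the equality statement. You correctly reduce it to showing that equality together with primitivity forces $k=1$, but at the decisive point you only describe what ``one must'' do --- determine the equality configurations of \Cref{lem_k+2}, show they rigidify the trajectories along $\gamma_v$, propagate this across the tree, and exhibit a disconnecting pane --- without carrying any of it out. That exclusion of $k\ge 2$ is exactly the content the proposition needs, so as written the ``only if'' half is unproved; moreover the program you sketch is speculative and considerably harder than what is required.

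The missing step closes with one more application of the lemma you already use, which is how the paper argues. Suppose $k\ge 2$ and look at the final step, where $R_{k-1}=P_{k-1}\cup P_k$ is split by the single cut $\gamma_{k-1}$, a shortest interior path of length $\ell\ge 2$ by primitivity. At this step both components are outermost, so cutting along the same path but taking the $P_k$-side first gives another valid sequence of pre-components of $R_{k-1}$ whose first element is $P_k$; hence \Cref{lem_k+2} applies to $P_k$ as well and yields $m_{P_k}+(L_{P_k}-\ell)\ge \ell+2$, i.e.\ $m_{P_k}+L_{P_k}\ge 2\ell+2\ge 6$. But your own tightness analysis shows that equality forces the root to satisfy $m_{P_k}+L_{P_k}=4-t\le 4$, a contradiction; therefore $k=1$ and $m=4-t\le 4$. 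In short, the bookkeeping you set up already suffices --- what is missing is not a rigidity analysis but the observation that \Cref{lem_k+2} can be applied on both sides of the last cut, not only to the pieces being peeled off (this is the paper's remark that the last remaining pre-component contains at least six panes incident to $c_1,\ldots,c_a$).
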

\begin{proof}
    We induct on $m$. If $m = 3$, \Cref{lem_cutting_path} implies that $k = t = 1$. 
    
    Let $P_1, \ldots, P_k$ be the sequence of pre-components. Recall that $P_1$ corresponds to an outermost component of $Q$. If $P_1$ is a triangular pre-component, then letting $P'$ be $P$ with $P_1$ removed, we see that $c_1, \ldots, c_a$ may merge or break up into some other set of cycles which have total length $m - 1$. Applying the induction hypothesis, we have 
    \[
        2(k - 1) - (t - 1) \leq (m - 1) - 2,
    \]
    which is precisely what we want. 

    If $P_1$ is not a triangular pre-component, suppose that $P_1 \cap P'$ is a path of length $\ell$. If the total number of panes incident to $c_1, \ldots, c_a$ in $P_1$ and $P'$ are $m_1$ and $m_2$, respectively, then $m_1 + m_2 = m + 2 \ell$. By \Cref{lem_k+2}, $m_1 \geq 2 \ell + 2$ (if $\ell = 1$, $m_1 \geq 4$ by the assumption that $P_1$ is not triangular). Then $m_2 \leq m - 2$. Applying the induction on $P'$,
    \[
        2(k - 1) - t \leq m_2 - 2 \leq m - 4,
    \]
    which is again what we want. 

    Now if $P$ is primitive, any cut must have length at least $2$. If $k \geq 2$, when we induct as above to the last remaining pre-component, we are left with a pre-component that contains at least $6$ panes incident to $c_1, \ldots, c_a$. However, this means that equality cannot hold. Thus, we must have $k = 1$, and the only way equality holds is if $m \leq 4$. 
\end{proof}

Finally, we are in a position to prove \Cref{thm_perim_strong}.
\begin{proof}[Proof of \Cref{thm_perim_strong}]
    It suffices to prove this for $P$ indecomposable. We induct on the perimeter of $P$. Let $\pi$ be the billiards permutation of $P$. Recall that $\pi$ contains no $2$-cycles. If $\pi$ contains only $3$-cycles, \Cref{lem_triangles} implies the inequality. Now suppose $\pi$ contains some $m$-cycle $c$, where $m \geq 4$. If $\pi$ contains no $3$-cycles, the inequality immediately holds. Thus, we may assume $\perim(P) \geq 7$. 

    Let $Q = P - c$ be the generalized grid polygon obtained by removing $c$ from $P$. Let $P_1, \ldots, P_k$ be a sequence of pre-components of $Q$, with corresponding components $Q_1, \ldots, Q_k$, and let $t$ be the number of triangular pre-components. By the induction hypothesis, for each $i \in [k]$, we have
    \[
        \cyc(Q_i) \leq \frac{1}{4} (\perim(Q_i) + 2).
    \]
    Furthermore, if $P_i$ is a triangular pre-component, then 
    \begin{align*}
        \cyc(Q_i) 
        &= \cyc(P_i) - 1 \\
        &\leq \frac{1}{4} (\perim(P_i) + 2) - 1\\
        &= \frac{1}{4} (\perim(Q_i) + 1).
    \end{align*}
    Since $\perim(P) = \sum_{i = 1}^{k} \perim(Q_i) + m$ and $\cyc(P) = \sum_{i = 1}^{k} \cyc(Q_i) + 1$, it follows from \Cref{prop_bound_on_comps} that
    \begin{align*}
        \cyc(P) 
        &= \sum_{i = 1}^{k} \cyc(Q_i) + 1\\
        &\leq \sum_{i = 1}^{k} \frac{\perim(Q_i)}{4} + \frac{2k - t}{4} + 1\\
        &\leq \frac{1}{4} (\perim(P) + 2).
    \end{align*}
    If $m > 4$ and $P$ is primitive, \Cref{prop_bound_on_comps} implies that the above inequality is strict. Thus, a connected primitive equality case must contain only $3$-cycles and $4$-cycles. 
    \end{proof}

\section{Future Directions}\label{sec_future}
As remarked in \cite{defant2023} and \cite{adams2024}, the study of combinatorial billiards is a new and rather unexplored topic. There are many interesting avenues for future research in this area, such as considering billiards systems in different grids, studying polygonal regions with holes, further investigating connections with plabic graphs (see \cite{defant2023} for more discussions on these directions), and studying billiards systems on other combinatorial objects (see \cite{adams2024}). Below, we discuss some potential future directions more relevant to the contents of this article. 

By \Cref{thm_perim}, the primitive equality cases contain exactly two $3$-cycles and no cycles of length greater than $4$ in their billiards permutations. It is a natural question to fully classify the equality cases. 
\begin{question}\label{qst_equality_cases}
    Is there a way to characterize all primitive simple grid polygons (or generalized grid polygons) which have exactly two $3$-cycles and no cycles of length greater than $4$? 
\end{question}

If we look at the $1$-skeleton of a generalized grid polygon (see \Cref{def_polygon}), we obtain a connected undirected plane graph (a planar graph with a particular drawing on the plane) such that every face except the outer face is a triangle. This is because a wedge of disks along boundary points has an embedding in the plane. Lam and Postnikov \cite{lam2020} call a plane graph which only has triangular faces a \defterm{cactus}. They note that a cactus (perhaps more precisely the geometric realization of a generalized grid polygon) is either a single edge, a triangulated disk, or a wedge of smaller cacti along their boundary vertices. 

\begin{definition}[\cite{lam2020}]\label{membrane_A2}
    A \defterm{membrane of type $A_2$} with \defterm{boundary loop} $(b_1, \ldots, b_n)$ is a pair $(G, f)$ where $G$ is a cactus on a vertex set $V$ with a sequence of boundary vertices $x_1, \ldots, x_n$ and $f$ is a map $f: V \to T$ to the triangular grid such that 
    \begin{enumerate}
        \item the endpoints of any edge of $G$ are mapped by $f$ to endpoints of a pane, and
        \item for each $i \in [n]$, $f(x_{i})$ is the tail of $b_i$.
    \end{enumerate}
    A membrane of type $A_2$ is \defterm{minimal} if it has the minimum possible area of any membrane of type $A_2$ with the same boundary loop. 
\end{definition}

Every generalized grid polygon is a minimal membrane of type $A_2$ by the above. In fact, they can be viewed as simply-connected simplicial complexes (not necessarily homogeneous) satisfying \Cref{def_polygon} (1) and (2). \Cref{fig_good_eg_2} shows an example of a minimal membrane of type $A_2$ which is not a generalized grid polygon. We can similarly define billiards permutations on minimal membranes of type $A_2$. It is not hard to see that the billiards permutation is exactly Postnikov's \cite{postnikov2006} strand permutation on plabic graphs under the map taking membranes to plabic graphs given in \cite{lam2020}. We conjecture that \Cref{thm_perim_strong} holds for this larger class of objects. 

\begin{conjecture}\label{conj_membranes}
    Let $P$ be a minimal membrane of type $A_2$. Then
    \[
        \cyc(P) \leq \frac{1}{4}(\perim(P) + 2\comps(P)).
    \]
    Furthermore, if $P$ is indecomposable, primitive, and not a single edge, then 
    \[
        \cyc(P) = \frac{1}{4}(\perim(P) + 2)
    \]
    if and only if the billiards permutation of $P$ contains exactly two $3$-cycles and no cycles of length greater than $4$. 
\end{conjecture}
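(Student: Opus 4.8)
The plan is to reduce \Cref{conj_membranes} to \Cref{thm_perim_strong} by establishing a structural description of minimal membranes of type $A_2$. Concretely, I would prove the following structure lemma: every minimal membrane of type $A_2$ is a wedge of finitely many indecomposable generalized grid polygons and finitely many single‑edge membranes, and this wedge decomposition is exactly its decomposition into components. Granting this, the conjecture follows quickly, since $\cyc$, $\perim$, and $\comps$ are all additive over wedge summands (a beam never passes through a wedge point). It therefore suffices to treat each summand: for an indecomposable generalized grid polygon summand, \Cref{thm_perim_strong} gives $\cyc \le \frac14(\perim + 2)$; for a single‑edge summand $e$ one checks directly from the conventions that $\perim(e) = 2$, $\comps(e) = 1$, and $\cyc(e) \le 1 = \frac14(\perim(e)+2)$. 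Summing over all summands yields the inequality. For the equality clause, an indecomposable, primitive minimal membrane that is not a single edge is a single generalized grid polygon, so the equality characterization is exactly that of \Cref{thm_perim_strong}; this also explains why single edges must be excluded, as a single edge attains equality but its billiards structure is not of the ``two $3$-cycles and no longer cycles'' form.

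To prove the structure lemma I would argue as follows. A minimal membrane of type $A_2$ is a cactus $G$ — a plane graph whose bounded faces are all triangles — with a grid map $f$. The key input I would extract from Lam--Postnikov \cite{lam2020} is that minimality forbids ``folds'': there is no pair of triangular faces sharing an edge that are identified by $f$, since a fold can always be cancelled, strictly decreasing the area. Let $X_2$ be the subcomplex spanned by the triangular faces. Around any vertex $v$ topologically interior to $X_2$, the link is a cycle, necessarily a cycle of six triangle‑arcs; by the no‑fold property the six incident triangles map injectively onto the six grid triangles around $f(v)$, i.e.\ they form a unit hexagon, which is \Cref{def_polygon} (3). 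Similarly every interior edge of $X_2$ lies in exactly two triangles, which by no‑fold map to a diamond, giving \Cref{def_polygon} (4); conditions (1) and (2) are built into the membrane definition. Hence each connected component of $X_2$ is an indecomposable generalized grid polygon (or a single triangle). Finally, any edge $xy$ of $G$ lying in no triangle is a whisker: if $x$ and $y$ both lay in the part of $G$ assembled so far, adding $xy$ would create a nontrivial cycle, contradicting simple‑connectedness. Assembling $G$ one triangle‑component and one whisker at a time then exhibits it as the claimed wedge.

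The step I expect to be the main obstacle is pinning down the Lam--Postnikov description precisely enough: establishing that minimality really implies the no‑fold property in the strong form used above (local injectivity of $f$ around each interior vertex and edge), and ruling out subtler configurations that might survive minimality without being generalized grid polygons. A secondary technical point is to fix, once and for all, a consistent convention for the billiards ``permutation'' and the cycle count of the degenerate single‑edge membrane and to check compatibility with wedging. Should the clean structure lemma fail — for instance if there exist non‑folding minimal membranes whose $2$‑dimensional part is not a generalized grid polygon — the fallback is to redo the induction of \Cref{sec_perim} directly in the membrane category: re‑prove \Cref{thm_dropping_cycle} after extending the tree‑of‑horizontal‑strips machinery of \Cref{lem_tree_of_strips} and \Cref{lem_tree_reconstruction} to allow genuinely lower‑dimensional (height‑zero or edge‑only) strips, and then re‑run \Cref{lem_cutting_path}, \Cref{lem_k+2}, \Cref{prop_bound_on_comps}, and \Cref{lem_triangles} essentially verbatim, with the single edge as the one additional base case.
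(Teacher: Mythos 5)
This statement is labeled as a conjecture in the paper, and the paper does not prove it; it is left open precisely because the authors could not push their machinery from generalized grid polygons to all minimal membranes of type $A_2$. Your reduction rests on a structure lemma that is false, and the paper exhibits the counterexample: \Cref{fig_good_eg_2} is a minimal membrane of type $A_2$ that is not a wedge of generalized grid polygons and single edges, because it violates \Cref{def_polygon} (3). The gap in your argument is the claim that the no-fold property forces the link of an interior vertex of $X_2$ to consist of exactly six triangles. What no-fold actually gives you is that consecutive triangles around the link map to grid triangles differing by a $60^{\circ}$ rotation about $f(v)$, so the link closes up whenever its length is \emph{any} positive multiple of six; with twelve triangles the map $f$ wraps around $f(v)$ twice, and such configurations can occur in minimal membranes (minimality constrains the total area for a given boundary loop, not local injectivity of $f$). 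So the $2$-dimensional part of a minimal membrane need not be a generalized grid polygon, and additivity over wedge summands does not reduce the conjecture to \Cref{thm_perim_strong}.

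Your fallback --- rerunning the induction of \Cref{sec_perim} directly in the membrane category --- runs into exactly the obstruction the paper identifies: the horizontal strips of a minimal membrane of type $A_2$ need not form a tree (those of \Cref{fig_good_eg_2} form a cycle of length four), so \Cref{lem_tree_of_strips} fails, and with it the reconstruction argument of \Cref{lem_tree_reconstruction} and the cycle-dropping operation of \Cref{thm_dropping_cycle}, which is the engine of the whole induction. To make progress on \Cref{conj_membranes} you would need either a replacement for the tree-of-strips formalism that tolerates cycles among horizontal strips, or a genuinely different argument that does not pass through dropping a cycle.
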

The reason that our proof does not work for all minimal membranes of type $A_2$ is that the horizontal strips of a minimal membrane of type $A_2$ may not have the tree structure described in \Cref{lem_tree_of_strips} (for example, the horizontal strips of \Cref{fig_good_eg_2} form a cycle of length four).

Defant and Jiradilok's motivation for their paper \cite{defant2023} was to understand strand permutations of plabic graphs. Our main result suggests that studying the dual objects---billiards permutations and membranes---could prove productive due to the geometric properties that they have. Besides \Cref{thm_perim_strong}, another illustration of the usefulness of generalized grid polygons in the study of plabic graphs is a simpler proof of the following result of Defant and Jiradilok \cite{defant2023}. 
\begin{theorem}[\cite{defant2023}]\label{thm_area}
    If $P$ is a simple grid polygon, then 
    \[
        \cyc(P) \leq \frac{1}{6}\area(P) + 1,
    \]
    where $\area(P)$ is the number of unit triangles contained in $P$ (this is the Euclidean area of $P$ multiplied by the normalizing factor of $4/\sqrt{3}$). Furthermore, equality holds if and only if $P$ is a tree of unit hexagons. 
\end{theorem}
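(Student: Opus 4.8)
The plan is to derive \Cref{thm_area} from a more general statement about generalized grid polygons: every generalized grid polygon $P$ satisfies $\cyc(P) \leq \frac{1}{6}\area(P) + \comps(P)$, with equality, when $P$ is indecomposable and not a single unit triangle, if and only if $P$ is a tree of unit hexagons (unit hexagons glued along panes in a tree pattern). The reduction to this statement is immediate: $\area$, $\cyc$, and $\comps$ are all additive over the wedge decomposition (a light beam never passes through a wedge vertex, so every trajectory stays inside one component), so it suffices to prove the inequality for indecomposable $P$, and for a simple grid polygon $\comps(P) = 1$. From here I would induct on $\area(P)$.

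For the base of the induction, if the billiards permutation of $P$ consists only of $3$-cycles, then \Cref{lem_triangles} forces $P$ to be a unit triangle ($\area(P) = 1$, $\cyc(P) = 1$, strict inequality) or a unit hexagon ($\area(P) = 6$, $\cyc(P) = 2$, equality), so the claim holds. Otherwise $P$ has an $m$-cycle $c$ with $m \geq 4$. Set $Q = P - c$ via \Cref{thm_dropping_cycle}, fix a sequence of pre-components $P_1, \dots, P_k$ with corresponding components $Q_1, \dots, Q_k$, so $k = \comps(Q)$, and write $\area(P) = \area(Q) + A$ where $A$ is the number of $2$-simplices deleted in passing from $P$ to $Q$. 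Since $A \geq 1$ the area strictly decreases, so the induction hypothesis applies to each indecomposable $Q_i$; summing gives $\cyc(Q) \leq \frac{1}{6}\area(Q) + k$. As $\cyc(P) = \cyc(Q) + 1$, the inequality $\cyc(P) \leq \frac{1}{6}\area(P) + 1$ follows once we know $A \geq 6k$ (when $k = 0$, i.e.\ $Q$ is empty, we have $\cyc(P) = 1$ and the bound is immediate).

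Thus the heart of the argument is the following lemma: if $c$ is an $m$-cycle with $m \geq 4$ and $Q = P - c$ has $k \geq 1$ components, then at least $6k$ unit triangles are deleted, with equality (for $k = 1$) only in one explicit local configuration. I would prove the case $k = 1$ first, working at the level of the tree of horizontal strips from \Cref{lem_tree_of_strips}: the removed $2$-simplices are exactly those crossed by beams of $c$ in two of the six beam directions, so one tracks the cyclic sequence of directions of the $m \geq 4$ segments of $c$ (a $4$-cycle, for instance, necessarily makes two left turns and two right turns) together with the widths of the horizontal strips these segments traverse; a short finite case analysis — the tight case being $m = 4$ — gives at least $6$ deleted triangles, with equality precisely when $c$ is the $4$-cycle produced by attaching a unit hexagon to $P$ along a pane. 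For $k \geq 2$ one combines the per-pre-component estimate with \Cref{lem_cutting_path}, \Cref{lem_k+2}, and the counting bound $2k - t \leq m - 2$ from \Cref{prop_bound_on_comps}: each non-triangular pre-component contributes at least $6$ deleted triangles, and the cut structure forces the total to reach $6k$.

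For the equality characterization one runs the induction backwards. Equality $\cyc(P) = \frac{1}{6}\area(P) + 1$ with $P$ indecomposable and not a triangle forces, at each step, the dropped cycle to be a $4$-cycle realizing the $A = 6$, $k = 1$ equality case of the key lemma, and forces the base case to be a unit hexagon rather than a unit triangle. By the equality analysis of the key lemma, each such step is the attachment of a unit hexagon along a pane, so $P$ is built from one hexagon by repeatedly gluing hexagons along panes, i.e.\ $P$ is a tree of unit hexagons; conversely a tree of $h$ unit hexagons has $\area = 6h$ and $\cyc = h + 1$, so it attains equality. I expect the main obstacle to be the key lemma, and within it the equality analysis of the $4$-cycle case: identifying exactly which configurations delete only $6$ triangles, and checking that these are precisely hexagon attachments, is where essentially all the new work lies — everything else is bookkeeping built on \Cref{thm_dropping_cycle} and \Cref{prop_bound_on_comps}.
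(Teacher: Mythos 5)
Your architecture (generalize to $\cyc(P) \leq \frac{1}{6}\area(P) + \comps(P)$, induct on area by dropping an $m$-cycle with $m \geq 4$, and reduce everything to a lower bound on the number of deleted triangles) matches the paper's, but there is a genuine gap: you reduce only to \emph{indecomposable} polygons via the wedge decomposition, whereas the paper also reduces to \emph{primitive} ones, using the additivity $\cyc(P) = \cyc(P_1) + \cyc(P_2) - 1$ when $P_1 \cap P_2$ is a single interior $1$-simplex (the generalization of Lemma~3.1 of Defant--Jiradilok). Without that second reduction your key lemma ``$A \geq 6k$'' is false. Take $P$ to be a tree of two unit hexagons (indecomposable but not primitive) and let $c$ be its unique $4$-cycle. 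Then $Q = P - c$ consists of $k = 2$ unit triangles (the central triangles of the two hexagons, carrying the two $3$-cycles), so $A = \area(P) - \area(Q) = 12 - 2 = 10 < 12 = 6k$. Your chain $\cyc(P) = \cyc(Q) + 1 \leq \frac{1}{6}\area(Q) + k + 1$ then yields only $\cyc(P) \leq \frac{10}{3}$ rather than the required $3$; the true bound survives only because the inductive estimate $\cyc(Q_i) \leq \frac{1}{6}\area(Q_i) + 1$ has slack $\frac{5}{6}$ on each unit triangle, slack your induction does not exploit. The same example defeats your per-pre-component accounting: here each pre-component is a \emph{triangular} pre-component (the cut pane has length $1$, which primitivity would forbid), each deletes only five triangles, and your proposal gives no adequate bound for triangular pre-components at all. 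Since this is the first interesting equality case, the failure is not peripheral.

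The fix is exactly the step you omitted: first split $P$ along every disconnecting interior pane and prove the statement for indecomposable \emph{primitive} polygons. There, trees of more than one hexagon never arise, every cut from \Cref{lem_cutting_path} has length at least $2$, there are no triangular pre-components, and \Cref{lem_k+2} together with \Cref{prop_bound_on_comps} gives at least six deleted triangles per pre-component when $k \geq 2$. In the primitive setting the $k = 1$ case is also simpler than your proposed analysis of the turn sequence of a $4$-cycle: one checks that the only cycle deleting at most six triangles is the $3$-cycle of a unit hexagon (the next smallest deletes nine), so the ``hexagon attached along a pane'' equality configuration you set out to classify never occurs -- it is not primitive. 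The equality characterization then falls out of the additivity lemma rather than from running the induction backwards: equality for $P$ holds if and only if it holds for each primitive piece, i.e.\ if and only if each piece is a unit hexagon, i.e.\ if and only if $P$ is a tree of unit hexagons.
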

\begin{proof}
    As in \Cref{thm_perim}, we prove this result more generally for generalized grid polygons:
    \[
        \cyc(P) \leq \frac{1}{6}\area(P) + \comps(P),
    \]
    where $\comps(P)$ is the number of components of $P$. 
    
    The following generalization of a lemma of Defant and Jiradilok (\cite[Lemma~3.1]{defant2023}) is straightforward and we skip its proof. Let $P$ be a generalized grid polygon which is the union of two generalized grid polygons $P_1, P_2$ which intersect at a single $1$-simplex. Then $\cyc(P) = \cyc(P_1) + \cyc(P_2) - 1$. This result reduces the inequality to the case where $P$ is indecomposable and primitive. 

    We use \Cref{thm_dropping_cycle} to prove the theorem via induction on $\area(P)$. It is easy to check that a unit hexagon satisfies the equality and any indecomposable primitive generalized grid polygon with only once cycle satisfies the inequality strictly. Suppose $P$ has at least two cycles and is not a unit hexagon. Suppose we remove some $m$-cycle $c$ and $P-c$ has components $Q_1, \ldots, Q_k$. Denote by $\area(c)$ the number of $2$-simplices removed in the process of dropping $c$. Then
    \begin{align*}
        \cyc(P) 
        &= 1 + \sum_{i = 1}^{k} \cyc(Q_i) \\
        &\leq 1 + \sum_{i = 1}^{k} \left(\frac{1}{6}\area(Q_i) + 1 \right) \\
        &= k + 1 + \frac{1}{6}(\area(P) - \area(c)).
    \end{align*}
    Therefore, it suffices to prove that any cycle $c$ has  $\area(c) > 6k$. Observe that since $P$ is indecomposable and primitive, if $k = 1$, then the only cycle with $\area(c) \leq 6$ is the $3$-cycle in a unit hexagon. But the only indecomposable primitive generalized grid polygon with such a $3$-cycle is a unit hexagon. (The second smallest cycle has $\area(c) = 9$). 

    Now suppose $k \geq 2$. Let $P_1, \ldots, P_k$ be a sequence of pre-components which correspond to $Q_1, \ldots, Q_k$. Since $P$ is primitive, all the shortest paths from \Cref{lem_cutting_path} separating these pre-components have length at least $2$, and there is no triangular pre-component. Thus, by \Cref{prop_bound_on_comps}, each pre-component satisfies \Cref{lem_k+2}, with at least one that is not equality case. We can check that when the path has length at least two, at least six $2$-simplices are deleted in the process of dropping the cycle $c$, which completes the proof. 
\end{proof}

\begin{figure}[ht]
    \centering
    \includegraphics[width=0.3 \textwidth]{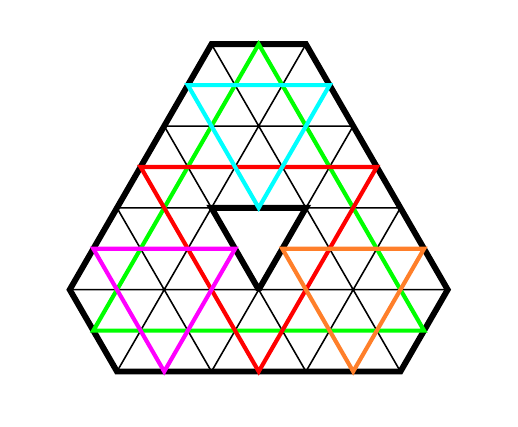}
    \caption{A grid polygon with one hole. The billiards permutation of this polygon has five $3$-cycles.}
    \label{fig_holeygon}
\end{figure}

A natural question to consider is the generalization of \Cref{thm_perim} and \Cref{thm_area} for grid polygons with holes. It is not hard to extend \Cref{def_polygon} to account for these shapes and define billiards permutations on them. As noted by Defant and Jiradilok \cite{defant2023}, the there exists grid polygons with holes violating \Cref{thm_perim} and \Cref{thm_area} as written (see \Cref{fig_holeygon}). We have the following conjectural analogues of \Cref{thm_perim} and \Cref{thm_area} for grid polygons with holes. 

\begin{conjecture}\label{conj_perim}
    If $P$ is a grid polygon with holes, then
    \[
        \cyc(P) \leq \frac{1}{4}(\outerperim(P) + 2 \innerperim(P) + 2),
    \]
    where $\outerperim(P)$ is the perimeter of the outer boundary of $P$ and $\innerperim(P)$ is the total perimeter of the holes.  
\end{conjecture}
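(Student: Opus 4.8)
The plan is to mimic the inductive strategy of \Cref{thm_perim_strong}, but over the larger class of generalized grid polygons with holes, tracking both the outer perimeter and the inner perimeter under the operation of dropping a cycle. First I would extend \Cref{def_polygon} to allow the geometric realization to be a wedge of surfaces with boundary, each of which is a disk-with-holes rather than a disk; the notions of component, horizontal strip, and billiards permutation all carry over, and \Cref{thm_dropping_cycle} should still hold, since dropping a cycle is a purely local modification of the horizontal strips and their gluing tree. The key new bookkeeping quantity is $\holes(P)$, the total number of holes; one expects an intermediate inequality of the shape $\cyc(P) \leq \tfrac14\bigl(\outerperim(P) + 2\innerperim(P) + 2\comps(P) + 6\holes(P)\bigr)$ or similar, with the hole term arising because a hole behaves, roughly, like an extra ``negative component'' that a cycle must enclose.

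The main structural step is a replacement for \Cref{prop_bound_on_comps}: when we drop an $m$-cycle $c$ from an indecomposable $P$ with holes, the cycle $c$ can either separate $P$ into pre-components as before, or it can ``swallow'' a hole, turning a bounded region with $h$ holes into a disjoint union whose hole count drops. I would define pre-components exactly as in \Cref{lem_cutting_path}, but now a cut path may also run from the outer boundary to a hole boundary, or between two hole boundaries, and I would prove the analogue of \Cref{lem_k+2} in these cases: a cut of length $k$ reaching a hole forces at least $k + 2$ boundary panes (inner or outer) to be hit by $c$, where now some of those panes may lie on the hole. Summing over pre-components and over the cuts, the term $2\comps - t$ from \Cref{prop_bound_on_comps} gets augmented by a controlled multiple of $\holes$, and the weighting ``$2$ for inner perimeter, $1$ for outer perimeter'' in the target inequality is exactly what makes the telescoping work: each pane on a hole boundary that gets deleted when $c$ is dropped is charged twice, matching the fact that dropping $c$ must reduce $2\innerperim$ by twice the number of such panes.

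The hard part will be controlling how a single cycle interacts with a hole whose boundary it does \emph{not} entirely consume — in \Cref{fig_holeygon} the offending polygon has a hole whose boundary is shared among several $3$-cycles, and no single cycle can be dropped to decrease $\holes$. So the induction cannot always decrease the number of holes, which means the $6\holes(P)$ slack term must genuinely be present in the intermediate inequality and must be shown not to be consumed prematurely; I expect this to require a careful case analysis of cycles adjacent to a hole, analogous to but more delicate than \Cref{lem_triangles}. A secondary obstacle, inherited from the remark after \Cref{conj_membranes}, is that the horizontal strips of a grid polygon with holes need not form a tree — they can form a graph with independent cycles, one per hole — so \Cref{lem_tree_of_strips} and \Cref{lem_tree_reconstruction} must be replaced by a statement that the strip graph has first Betti number equal to $\holes(P)$, and the reconstruction lemma must be upgraded to allow gluing along a spanning tree of this graph plus $\holes(P)$ extra identifications. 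Once that combinatorial infrastructure is in place, the proof of Conjecture \ref{conj_perim} should follow the same drop-a-cycle induction as \Cref{thm_perim_strong}, with the weighted perimeter $\outerperim + 2\innerperim$ playing the role that $\perim$ played before.
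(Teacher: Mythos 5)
The statement you are attempting is Conjecture~\ref{conj_perim}, which the paper explicitly leaves open; there is no proof in the paper to compare against, so your proposal must stand entirely on its own. As written it is a research plan rather than a proof, and the places where you write ``should still hold,'' ``one expects,'' and ``I expect this to require'' are exactly where the real difficulties live. The most serious gap is the assertion that \Cref{thm_dropping_cycle} carries over to polygons with holes. The paper's proof of that theorem is not a purely local argument: it decomposes $P$ into horizontal strips, invokes \Cref{lem_tree_of_strips} to obtain a \emph{tree} of strips, modifies each strip, and then invokes \Cref{lem_tree_reconstruction} to reassemble --- and the reconstruction lemma uses the tree hypothesis essentially, since gluing along a cycle of identifications can produce a complex that is not simply connected, can violate condition (3) of \Cref{def_polygon} at vertices where the gluing wraps around, and need not admit a consistent simplicial map to $T$. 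You correctly note that the strip graph of a holed polygon has first Betti number $\holes(P)$, but you do not supply the upgraded reconstruction lemma, nor do you address what happens topologically when the dropped cycle winds around a hole: the hole can merge with the outer boundary, which changes $\innerperim$ and $\outerperim$ discontinuously and is not captured by your per-pane charging scheme. The intermediate inequality itself is also left undetermined (``$+\,6\holes(P)$ or similar''), so there is nothing concrete to verify telescoping against.

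The second gap is that the induction has no engine in the critical case. You observe, correctly, that in \Cref{fig_holeygon} every cycle is a $3$-cycle adjacent to the hole and no single drop reduces $\holes(P)$; since that polygon is an \emph{equality} case of the conjecture, any slack term attached to $\holes(P)$ must be exactly saturated there, and your proposal supplies no analogue of \Cref{lem_triangles} or \Cref{prop_bound_on_comps} for polygons all of whose cycles interact with a hole. Until you (i) state and prove the holed versions of \Cref{lem_tree_reconstruction} and \Cref{thm_dropping_cycle}, (ii) pin down the exact intermediate inequality and verify it is preserved under a drop, including drops that change the number of holes or components, and (iii) handle the base case in which no cycle can be dropped without destroying the induction invariant, this remains an outline of an attack on an open problem rather than a proof.
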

Note that \Cref{fig_holeygon} is an equality case of \Cref{conj_perim}. 
\begin{conjecture}\label{conj_area}
    If $P$ is a grid polygon with holes, then
    \[
        \cyc(P) \leq \frac{1}{6}\area(P) + \holes(P) + 1,
    \]
    where $\holes(P)$ is the number of holes in $P$. 
\end{conjecture}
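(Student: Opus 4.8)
\section*{Proof proposal for \texorpdfstring{\Cref{conj_area}}{Conjecture \ref{conj_area}}}

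The plan is to run the argument used for \Cref{thm_area}, but in a framework that allows holes. First I would extend \Cref{def_polygon} to \emph{generalized grid polygons with holes}: finite simplicial complexes with a simplicial map to $T$ satisfying conditions (2)--(4) of \Cref{def_polygon}, whose geometric realization is a wedge, along boundary points, of compact orientable surfaces of genus $0$ (a disk, or a sphere with $b\ge 2$ boundary circles). Here $\holes(P)$ counts the total number of ``inner'' boundary circles and $\comps(P)$ counts the indecomposable pieces of the wedge. Billiards permutations are defined exactly as before. The target is the stronger, additive inequality
\[
    \cyc(P) \leq \frac{1}{6}\area(P) + \holes(P) + \comps(P).
\]
The wedge/primitive reduction lemmas (\cite[Lemma~3.1]{defant2023} and the gluing-along-a-$1$-simplex lemma used in the proof of \Cref{thm_area}) still apply verbatim, since under such a gluing $P = P_1 \cup P_2$ each of $\cyc - 1$, $\area$, $\holes$, $\comps - 1$ is additive (gluing at a point or an edge creates no new hole). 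Hence it suffices to treat $P$ with $\comps(P)=1$ (a disk with holes) that is primitive, where we must show $\cyc(P) \le \tfrac16\area(P) + \holes(P) + 1$; when $\holes(P)=0$ this is \Cref{thm_area}, so we may assume $P$ has a hole.

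Next I would induct on $\area(P)$, dropping a cycle $c$ and applying the hypothesis to the components $Q_1,\dots,Q_k$ of $P-c$, using $\area(P)=\sum\area(Q_i)+\area(c)$ and $\cyc(P)=1+\sum\cyc(Q_i)$. The crux is the choice of $c$ and the bookkeeping for holes. I expect the dichotomy: if $c$ is contractible in $P$ it cannot enclose a hole, every $Q_i$ inherits the holes lying inside it, so $\sum\holes(Q_i)=\holes(P)$ and the hole-free estimates of \Cref{lem_cutting_path}, \Cref{lem_k+2}, \Cref{prop_bound_on_comps} go through to give $\area(c) \ge 6(k-1)$ with strictness coming from primitivity (as in \Cref{thm_area}); if instead $c$ is non-contractible one should pick an \emph{innermost} such $c$ and show that dropping it merges each hole it winds around into the piece carrying the collar of $c$, so that $\sum\holes(Q_i) \le \holes(P) - 1$, and simultaneously that the area of the collar removed is at least $6$ times the net increase of $k + \holes$. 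Summing over $i$ and plugging in then yields $\cyc(P) \le \tfrac16\area(P)+\holes(P)+1$, after separately checking the smallest annular base cases (the hexagonal ring and its immediate relatives) by hand, exactly as the unit hexagon is checked in \Cref{thm_area}.

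The main obstacle — and the reason this is only a conjecture — is twofold. First, \Cref{thm_dropping_cycle} was built on \Cref{lem_tree_of_strips}: for an indecomposable polygon the horizontal strips form a \emph{tree}, and a cycle is dropped by shortening strips independently and regluing along that tree. With $h$ holes the strip graph has first Betti number $h$ (the same failure flagged for membranes after \Cref{conj_membranes}), so one must re-prove that the independent shortenings still reglue to a valid generalized grid polygon with holes having the correct boundary loops and cycle structure even when the gluing graph has cycles. Second, the hole accounting for a non-contractible $c$ is delicate: a priori a hole-winding cycle that avoids the outer boundary could split $P$ into an inner piece carrying the holes and an \emph{annular} outer remnant, trading one hole for another and breaking the induction. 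One must show that an innermost non-contractible cycle either touches the outer boundary (pinching the annular remnant into a disk, so it contributes no hole) or that the remnant can be further decomposed favorably. Establishing these two points — effectively redoing \Cref{sec_prelim} in the presence of holes and proving a sharp ``hole-accounting'' lemma for cycle removal — is the heart of the problem; everything else is a transcription of the arguments already in \Cref{sec_perim}.
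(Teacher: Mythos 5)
This statement is \Cref{conj_area}, which the paper leaves as an open conjecture with no proof, so there is nothing to compare your argument against; the only question is whether your proposal closes the gap, and it does not --- you say as much yourself. The roadmap (extend the definitions to wedges of genus-zero surfaces with boundary, strengthen to $\cyc(P)\le\frac16\area(P)+\holes(P)+\comps(P)$, reduce to indecomposable primitive $P$ via the gluing lemma, then induct on area by dropping a cycle) is the natural transcription of the proof of \Cref{thm_area}, and the reduction steps you describe are sound. But every inductive step routes through \Cref{thm_dropping_cycle}, and that theorem's proof is not local: it decomposes $P$ into horizontal strips, shortens each independently, and reglues via \Cref{lem_tree_reconstruction}, which requires the strip graph to be a tree (\Cref{lem_tree_of_strips}). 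With $h\ge 1$ holes the strip graph has first Betti number $h$ --- the same failure the paper flags after \Cref{conj_membranes} --- so in the holed setting the object $P-c$ is not even constructed, and the quantities $\area(Q_i)$, $\cyc(Q_i)$, $\holes(Q_i)$ appearing in your induction have no referent. Compounding this, \Cref{lem_cutting_path}, \Cref{lem_k+2}, and \Cref{prop_bound_on_comps} all lean on the simple-connectedness of $P$ (e.g.\ the step in \Cref{lem_k+2} where a putative configuration is excluded because ``the cycle will encircle some boundary panes, violating the simply-connectedness of $P$''), so even for a contractible cycle $c$ the claim that these lemmas ``go through'' needs a genuine argument, not a remark: a cycle whose beams have winding number zero around a hole can still separate that hole from the outer boundary.

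The second gap is the hole-accounting for a non-contractible $c$: the inequality $\sum_i\holes(Q_i)\le\holes(P)-1$ and the companion bound that the collar removed has area at least $6$ times the net increase of $k+\holes$ are asserted, not proved, and you yourself exhibit the scenario in which the naive version fails (an innermost hole-winding cycle whose removal leaves an annular remnant, trading one hole for another and stalling the induction on $\holes$). Note also that the conjecture is sharp --- the hexagonal annulus of \Cref{fig_holeygon} has $\area=18$, $\holes=1$, and five $3$-cycles, meeting the bound with equality --- so there is no slack to absorb a lossy estimate in either the strip-regluing step or the hole-accounting step. These two points, which amount to redoing \Cref{sec_prelim} in the presence of holes plus a sharp cycle-removal lemma for non-contractible cycles, are the entire content of the conjecture; the rest of your proposal is bookkeeping that would follow once they are in place.
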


Another possible direction is considering analogues of \Cref{thm_perim} and \Cref{thm_area} for minimal membranes of type $A_3$ (see \cite{lam2020}) and higher. A membrane of type $A_3$ is a cactus graph equipped with a map to the $A_3$ root lattice (also called the face-centered cubic lattice). For example, the cactus graph in \Cref{fig_bad_eg_2} can now become a membrane of type $A_3$, mapping to three sides of a regular tetrahedron. We can define billiards permutations on membranes of type $A_3$ from strand permutations on the dual plabic graphs. However, since membranes of type $A_3$ are no longer mapped to a plane, the interpretation of the billiards permutation as light beams reflecting off of mirrors needs to be modified. It would be interesting if one can find a natural geometric interpretation for these billiards permutations. 

Finally, we can study properties of the set of billiards permutation more directly. For example, the permutation $(1 \; 3) (2 \; 4) \in S_4$ is not the billiards permutation of any generalized grid polygon. Doing so might allow us to understand or characterize the difference between the two sides of the inequality in \Cref{thm_perim_strong}.
\begin{question}
    Characterize the subset of permutations $S \subset S_n$ which appear as the billiards permutation of some generalized grid polygon of perimeter $n$. 
\end{question}

\bibliographystyle{amsplain0}
\bibliography{bib}

\end{document}